\newcommand{\STAB}{\mathrm{STAB}}
\newcommand{\ac}{$\alpha$-critical}
\newcommand{\fdg}{facet-defining graph} 
\newcommand{\cfg}{critical facet-graph} 
\newcommand{\mws}{maximum worth set}
\newcommand{\os}{odd subdivision}
\newcommand{\uos}{unit odd subdivision} 
\newcommand{\invos}{shrinking}
\newcommand{\talpha}{\beta}
\newcommand{\adm}{admissible}
\newcommand{\at}{acyclic tournament}
\newcommand{\R}{\mathbb{R}}
\newcommand{\Z}{\mathbb{Z}}
\newcommand{\one}{\mathrm{1\hspace{-.5ex}l}}
\newcommand{\Plo}{P_\mathrm{LO}}
\newtheorem{prop}{Proposition}
\newtheorem{cor}{Corollary}
\newtheorem{theorem}{Theorem}
\newtheorem{lemma}{Lemma}
\newtheorem{claim}{Claim}
\title{Weighted graphs defining facets: a connection between\\ 
stable set and linear ordering polytopes%
\thanks{This work was partially supported by the {\em Actions
de Recherche Concert\'ees (ARC)\,} fund of the {\em Communaut\'e
fran\c{c}aise de Belgique}.}
}
\author{
Jean-Paul Doignon\thanks{Universit\'e Libre de Bruxelles, D\'epartement de Math\'ematique,
c.p.~216,  B-1050 Bruxelles, Belgium,  doignon@ulb.ac.be.} 
\and Samuel Fiorini\thanks{Universit\'e Libre de Bruxelles, D\'epartement de Math\'ematique,
c.p.~216,  B-1050 Bruxelles, Belgium,  sfiorini@ulb.ac.be.}
\and Gwena\"el Joret\thanks{Universit\'e Libre de Bruxelles, D\'epartement d'Informatique,
c.p.~212,  B-1050 Bruxelles, Belgium, gjoret@ulb.ac.be. G. Joret is a Research Fellow of the Fonds 
National de la Recherche Scientifique (F.R.S.--FNRS).}
}
\date{}
\begin{document}

\maketitle

\begin{abstract}
A graph is $\alpha$-critical if its stability
number increases whenever an edge is removed from its edge set.  The
class of $\alpha$-critical graphs has several nice structural
properties, most of them related to their defect
which is the number of vertices minus two times the stability number.  
In particular, a
remarkable result of Lov\'asz (1978) is the finite basis theorem for
$\alpha$-critical graphs of a fixed defect.  
The class of
$\alpha$-critical graphs is also of interest for at least two topics
of polyhedral studies.  First, Chv\'atal (1975) shows that each
$\alpha$-critical graph induces a rank inequality which is 
facet-defining for its stable set polytope.  Investigating a weighted
generalization, Lipt\'ak and Lov\'asz (2000, 2001) introduce critical
facet-graphs (which again produce facet-defining inequalities for
their stable set polytopes) and they establish a finite basis
theorem.  Second, Koppen (1995) describes a construction that
delivers from any $\alpha$-critical graph a facet-defining inequality
for the linear ordering polytope.  Doignon, Fiorini and Joret (2006)
handle the weighted case and thus define facet-defining graphs.  Here
we investigate relationships between the two weighted generalizations
of $\alpha$-critical graphs.  We show that facet-defining graphs (for
the linear ordering polytope) are obtainable from $1$-critical
facet-graphs (linked with stable set polytopes). We then use this
connection to derive various results on facet-defining graphs, the
most prominent one being derived from Lipt\'ak and Lov\'asz's finite
basis theorem for critical facet-graphs. At the end of the paper we
offer an alternative proof of Lov\'asz's finite basis theorem for
$\alpha$-critical graphs.
\end{abstract}

\section{Introduction}

A (finite, simple, undirected) graph $G$ is {\ac}  
if its stability number $\alpha(G)$ (defined as the maximum cardinality 
of a subset of mutually nonadjacent vertices) 
increases whenever an edge is removed from its edge set.
These graphs have several interesting structural properties, 
most of which being related to their {\em defect} $\delta = |G| - 2\alpha(G)$. 
An important result of Lov\'asz~\cite{L78} shows for instance that
for every fixed defect $\delta \geq 1$, there exists a finite collection of graphs
from which  every connected {\ac} graph with defect $\delta$ can be derived
using a certain edge subdivision operation. 

One of the interests of {\ac} graphs lies in their connection with
facets of some polytopes arising in combinatorial optimization: 
Chv\'atal~\cite{C75} and Koppen~\cite{K95} showed how to obtain
facets of respectively the stable set and linear ordering polytopes
from (connected) {\ac} graphs. 
This link was investigated further in the recent years and
led to the introduction of two generalizations of {\ac} graphs, one called
{\em \cfg s}~\cite{LL00,LL01,S90} and the other {\em \fdg s}~\cite{CDF04,DFJ06}.
Graphs in both families  are vertex-weighted, and give rise to facets
of the stable set and linear ordering polytopes, respectively. 

Although examples show that the classes of {\cfg s} and {\fdg s} are (inclusion-wise)
incomparable, some of the known results on their respective structures are intriguingly
similar (see e.g.~\cite{LL01} and~\cite{DFJ06}). 
The purpose of this paper is to explain precisely how {\cfg s} and {\fdg s} are related to each other.

In a recent contribution, Fiorini~\cite{F06-recycle} already showed that
a subclass of the former, which we call {\em $1$-\cfg s}, are {\fdg s}.
Here we prove a converse result: Every {\fdg} can be obtained from some {$1$-\cfg}
using a simple contraction operation.  
This connection conveys a great deal of information on {\fdg s}.
In particular, the main result of Lipt\'ak and Lov\'asz~\cite{LL00},
an extension of Lov\'asz's finite basis theorem to the class of {\cfg s},
translates naturally to {\fdg s}. 

The paper is organized as follows. We first give the necessary
definitions and preliminaries in Section~\ref{sec-prelim}.
We then present in Section~\ref{sec-connection} 
our main result which relates {\fdg s} to {$1$-\cfg s},
and use it to derive new results on {\fdg s} from the theory of {\cfg s}.
Finally, in Section~\ref{sec-basis}, we go back to {\ac} graphs
and offer an alternative proof for
the finite basis theorem of Lov\'asz. The latter theorem 
is not only at the heart of the theory of {\ac} graphs, but also
a key ingredient in Lipt\'ak and Lov\'asz's proof for
the extension of the result to {\cfg s}.

\section{The stable set and linear ordering polytopes}
\label{sec-prelim}

In this section, we define the stable set and linear 
ordering polytopes, the two classes of weighted 
graphs under consideration, and the corresponding
facets. We also state the main known results on these 
two classes of weighted graphs.

\subsection{The stable set polytope and \cfg{}s}
\label{sec-SS-cfg}

The {\em stable set polytope $\STAB(G)$} of a graph $G$ is defined as 
the convex hull of the incidence vectors of all stable sets of $G$.
In other words, letting $V$ and $E$ respectively denote the vertex 
and edge sets of $G$, the stable set polytope of $G$ is the integer
hull of the polytope 
$$
P := \{x \in \R^V \mid x_i + x_j \le 1 \ \forall ij \in E,\ 
0 \le x_i \le 1 \ \forall i \in V\},
$$
that is, 
$$
\STAB(G) = \mathrm{conv}(P \cap \Z^V).
$$

A central question in polyhedral combinatorics is to determine the
facets of $\STAB(G)$. While this is believed to be impossible in 
general for complexity theoretic reasons, see, e.g., Papadimitriou 
and Yannakakis~\cite{PY84}, there exist numerous 
published works focussing on special classes of graphs or special 
families of facets. A large number of these papers are concerned with 
facets defined by {\em rank inequalities\/}, that is, inequalities of 
the form
$$
\sum_{v \in S} x_v \le \alpha(G[S])
$$ 
for some $S \subseteq V$. In particular, one might ask when the 
rank inequality obtained for $S = V$, i.e., $\sum_{v \in V} x_v 
\le \alpha(G)$, defines a facet of $\STAB(G)$. In 1975, 
Chv\'atal~\cite{C75} showed that this is the case whenever 
$G$ is a connected {\ac} graph, where $G$ is said to 
be {\em \ac} if $\alpha(G - e) > \alpha(G)$ for every $e\in E(G)$. 
Thus {\ac} graphs are of particular relevance to the polyhedral 
theory of the stable set polytopes.
The literature on these graphs is quite rich, most contributions 
dating back to the 60's and 70's (see~\cite{LP86} for a survey).
Two concepts turn out to be of key importance for the study of
\ac{} graphs: an invariant called the `defect' and an operation 
known as taking `\os{}s'. The {\em defect} of a graph $G$ is 
defined as $\delta = |G| - 2\alpha(G)$. This invariant is 
always nonnegative when $G$ is \ac{}. An {\em \os} of a graph 
$G$ is any graph that can be obtained from $G$ by replacing 
edges with odd-length paths. Any {\os} of a connected {\ac} 
graph $G$ with at least three vertices is again {\ac} and has
the same defect (see, e.g.,~\cite{LP86}). A central result, due 
to Lov\'asz~\cite{L78}, shows essentially that {\ac} graphs 
are naturally classified by their defect. It is known as the 
{\em finite basis theorem for \ac{} graphs}. 

\begin{theorem}[Lov\'asz~\cite{L78}]
\label{th-acg-basis}
For every integer $\delta \ge 1$, there exists a finite 
collection of graphs such that every connected {\ac} graph 
with defect $\delta$ is an {\os} of a graph in the collection.
\end{theorem}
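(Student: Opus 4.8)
The plan is to show that a connected {\ac} graph of defect $\delta$ carries only a bounded amount of combinatorial ``type'', and then to take the required collection to be the (automatically finite) set of all graphs realizing such a type.

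I would first record two elementary facts. (i) A connected {\ac} graph $G$ with at least three vertices has minimum degree at least $2$: if $v$ had degree $1$ with neighbour $u$, pick another neighbour $w$ of $u$ and a stable set $S$ of $G-uw$ with $|S|>\alpha(G)$, which exists since $G$ is {\ac}; then $S$ is not stable in $G$, so $u,w\in S$ and hence $v\notin S$, and $(S\setminus\{u\})\cup\{v\}$ is a stable set of $G$ with $|S|$ elements, contradicting the definition of $\alpha(G)$. Since a connected graph on at most two vertices has defect at most $0$, a connected {\ac} graph of defect $\delta\ge 1$ has at least three vertices, hence minimum degree at least $2$. (ii) If such a $G$ has no vertex of degree $\ge 3$, it is a cycle, necessarily odd since $\delta\ge 1$, hence an {\os} of $C_3$; so I may assume $G$ has a vertex of degree at least $3$. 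Call these the \emph{branch vertices} of $G$, and let $\widehat G$ be the multigraph obtained from $G$ by suppressing all vertices of degree $2$; then $G$ is obtained from $\widehat G$ by subdividing edges, a loop of $\widehat G$ at a branch vertex $b$ being subdivided into a cycle through $b$.

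The quantitative heart of the matter is the classical theorem of Lov\'asz (\cite{L78}; see also~\cite{LP86}) that, for each fixed $\delta$, the multigraph $\widehat G$ has a bounded number of edges over all connected {\ac} graphs $G$ of defect $\delta$; equivalently, $|E(G)|\le|V(G)|+f(\delta)$ for some function $f$. Granting this, write $n_i$ for the number of vertices of $G$ of degree $i$; from $n_0=n_1=0$ and $\sum_i (i-2)\,n_i = 2(|E(G)|-|V(G)|) \le 2f(\delta)$ one reads off that both the number of branch vertices, $\sum_{i\ge 3}n_i$, and their total degree, $\sum_{i\ge 3}i\,n_i$, are bounded in terms of $\delta$; as the number of edges of $\widehat G$ is half the latter, it too is bounded by a function of $\delta$. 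I expect this input -- essentially Lov\'asz's bound on the size of the ``skeleton'' of an {\ac} graph -- to be the one real obstacle; everything around it is bookkeeping.

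It remains to shrink the subdivisions. Replace the path (or cycle, for a loop) subdividing each edge of $\widehat G$ by one that is as short as possible while keeping the same parity and leaving $G$ simple; a short check shows this costs at most three subdivision vertices per edge of $\widehat G$ (a loop must remain subdivided into a triangle or a $4$-cycle, and an edge parallel to another must keep at least one subdivision vertex). Call the resulting simple graph $H$. By the previous paragraph $|V(H)|$ is bounded by a function of $\delta$; and $G$ is recovered from $H$ by expanding, for each edge of $\widehat G$, the short path back to the original one of the same parity, which amounts to replacing a single edge of $H$ by an odd path -- so $G$ is an {\os} of $H$. Taking the collection to be the finite set of all simple graphs on at most that many vertices completes the proof. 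The one point where a genuinely self-contained ``alternative'' proof has to do extra work is Lov\'asz's skeleton bound; it seems plausible that the polyhedral results developed earlier in the paper -- Chv\'atal's theorem together with the detour through the linear ordering polytope -- can be leveraged to reprove it, which would be the natural way for that machinery to re-enter here.
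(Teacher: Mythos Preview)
Your argument is circular. The statement you are asked to prove \emph{is} Lov\'asz's theorem from~\cite{L78}, and the ``classical theorem of Lov\'asz'' you invoke as a black box---that $|E(G)|\le |V(G)|+f(\delta)$ for connected {\ac} graphs of defect $\delta$---is equivalent to it. Indeed, since the minimum degree is at least $2$, the handshake identity $\sum_i(i-2)n_i=2(|E(G)|-|V(G)|)$ shows that bounding $|E(G)|-|V(G)|$ is the same as bounding the number of branch vertices (given the classical degree bound $\deg(v)\le \delta+1$), which is exactly the restatement of the theorem as Theorem~\ref{th-L78-deg3}. So when you write ``granting this'' you are granting the theorem itself; everything else in your write-up is the easy reduction between the two formulations, which the paper also records in the paragraph following Theorem~\ref{th-L78-deg3}.

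You are candid that the skeleton bound is ``the one real obstacle'', but you do not supply an argument for it---the closing sentence only speculates that polyhedral methods might help, with no concrete mechanism. The paper's contribution in Section~\ref{sec-basis} is precisely to \emph{prove} this bound, via a genuinely new route: reduce to maximum degree $3$ by vertex splitting, associate to $G$ a $2$-colored digraph $D_G$ on the edges incident to degree-$3$ vertices, show (Claim~\ref{claim-connection}, using Lemma~\ref{lem-bound-defect}) that any monochromatic admissible acyclic tournament in $D_G$ has order at most $\delta$, and then use Ramsey-type arguments on blow-ups of digraphs (Claims~\ref{claim-DbarD}--\ref{claim-blow-up}) to force such a tournament whenever there are too many degree-$3$ vertices. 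That is where all the work is; your proposal does not engage with it.
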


Let $G \preceq H$ whenever $H$ is an \os{} of $G$. This
defines a partial order on graphs. Consider the set 
of all connected {\ac} graphs partially ordered by $\preceq$. 
The graphs with fixed defect form a partition of this poset 
into upper monotone sets. Then the finite basis theorem amounts
to say that each of these upper monotone sets contains a finite 
number of minimal elements.

Let $G$ be any graph. Now consider a weight function $a$ on 
the vertices of $G$, that is, a function $a : V \to \Z_+$. 
The pair $(G,a)$ is referred to as a {\em (vertex)-weighted graph}.
From now on, in order to avoid some trivialities, we will always
assume that weighted graphs have at least three vertices and 
$a(v) > 0$ for all vertices $v$. Letting $\alpha(G,a)$ denote the 
maximum weight of a stable set in $G$, the weighted graph $(G,a)$ is 
said to be {\em critical} if $\alpha(G - e, a) > \alpha(G, a)$ for all 
edges $e$. Moreover, $(G,a)$ is said to be a {\em facet-graph} if 
the inequality
$$
\sum_{v \in V} a(v)\,x_{v} \le \alpha(G,a)
$$ 
defines a facet of $\STAB(G)$ and $G$ is connected (recall 
that we also assume that $G$ contains at least three vertices 
and the weights are positive). The \cfg{}s are the natural 
weighted counterpart of \ac{} graphs. Many results from the 
theory of \ac{} graphs were extended to \cfg{}s, see the works 
of Sewell~\cite{S90} and Lipt\'ak and Lov\'asz~\cite{LL00, LL01}. 

The {\em defect} of a weighted graph $(G,a)$ is defined as 
$\delta=a(V(G)) - 2 \alpha(G,a)$. As was the case for \ac{} 
graphs, this invariant turned out to be crucial for
studying \cfg{}s. The following result reveals much of the 
structural information conveyed by the defect of a {\cfg}.

\begin{theorem}[Lipt\'ak and Lov\'asz~\cite{LL01}]
\label{th-cfg-bound}
If $(G,a)$ is a {\cfg} with defect $\delta$, then $\deg(v) \le a(v) + \delta \le 2\delta$ for every $v\in V(G)$,
and $\deg(v)\le 2\delta - 1$ when $\delta > 1$.
\end{theorem}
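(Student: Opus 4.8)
\medskip
\noindent\emph{Proof idea.}
Write $\alpha:=\alpha(G,a)$ and call a stable set $S$ with $a(S)=\alpha$ a \mws; these are exactly the vertices of the face $F$ of $\STAB(G)$ defined by $\sum_v a(v)x_v\le\alpha$. The plan is to reduce the two inequalities $\deg(v)\le a(v)+\delta\le2\delta$ to the statement that certain induced subgraphs have nonnegative defect. Since $F$ is a facet it has $|V|$ affinely independent vertices, so for every $v$ there is a \mws\ containing $v$ and a \mws\ avoiding $v$; otherwise $F$ would coincide with $\{x_v=1\}\cap\STAB(G)$ or $\{x_v=0\}\cap\STAB(G)$, and since $\STAB(G)$ is full dimensional this would force $a$ to be a positive multiple of a unit vector, impossible as $a>0$ and $|V|\ge3$. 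From a \mws\ through $v$ together with the validity of the inequality one reads off $\alpha(G-N[v],a)=\alpha-a(v)$, where $N[v]=\{v\}\cup N(v)$, and from a \mws\ avoiding $v$ one gets $\alpha(G-v,a)=\alpha$. Hence the defect of $(G-v,a)$ equals $\delta-a(v)$ and the defect of $(G-N[v],a)$ equals $\delta+a(v)-a(N(v))$. Since every weight is at least $1$ we have $\deg(v)\le a(N(v))$, so the whole chain $\deg(v)\le a(v)+\delta\le2\delta$ follows once we know that $(G-v,a)$ and $(G-N[v],a)$ have nonnegative defect, i.e.\ that $a(V\setminus\{v\})\ge2\alpha$ and $a(V\setminus N[v])\ge2(\alpha-a(v))$.

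Next I would extract what criticality buys. For every edge $e=uv$ there is a set $T_e$ with $u,v\in T_e$ that induces exactly the edge $e$ in $G$ and satisfies $a(T_e)\ge\alpha+1$; deleting $v$ leaves a stable set of weight $\ge\alpha+1-a(v)$ whose only neighbour of $v$ is $u$. I would also record a swap inequality that uses only a \mws\ $S$ with $v\notin S$: the set $(S\setminus N(v))\cup\{v\}$ is stable, so $a(S\cap N(v))\ge a(v)$. In words, every \mws\ either contains $v$ or carries weight at least $a(v)$ on $N(v)$.

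The heart of the argument is a weighted Gallai/K\"onig-type structure theorem for the complement of a \mws. Fix a \mws\ $S$ and let $C=V\setminus S$ be the complementary vertex cover, so that $\delta=a(C)-a(S)$ and $a(C)=\alpha+\delta$. Combining the affine independence of the maximum worth sets with the criticality sets $T_e$, one shows that $C$ admits a fractional matching into $S$ that saturates every $s\in S$ to weight exactly $a(s)$; summing over $S$ gives $a(C)\ge a(S)$, hence $\delta\ge0$, but more is true: this structure can be chosen \emph{compatibly with a prescribed vertex $v$}, so that it restricts to a valid structure after deleting $N[v]$. Taking $S\ni v$ and removing $v$ from the stable side and $N(v)$ from the cover side leaves the \mws\ $S\setminus\{v\}$ of $G-N[v]$ and the cover $C\setminus N(v)$; the restricted fractional matching then certifies $a(C\setminus N(v))\ge a(S\setminus\{v\})=\alpha-a(v)$, which rearranges to $a(N(v))\le\delta+a(v)$. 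Applying the same mechanism to the cover side of a \mws\ avoiding $v$ yields $a(V\setminus\{v\})\ge2\alpha$, i.e.\ $a(v)\le\delta$. The swap inequality from Step~2 is what controls the vertices of $S$ that are adjacent to $N(v)$ when the fractional matching is repaired after the deletion. Building this weighted matching from the two hypotheses, and above all arranging that it survives the removal of $N[v]$ while keeping track of all the weights, is the main obstacle; in the unit-weight case it is replaced by the classical fact that the complement of a maximum stable set in a connected \ac\ graph carries an ordinary matching of the right size, which is essentially all one needs there.

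Finally, for $\deg(v)\le2\delta-1$ when $\delta>1$ I would argue by an equality analysis. If $\deg(v)=2\delta$ then the chain above forces $a(v)=\delta$, forces $a(w)=1$ for every $w\in N(v)$, and forces both $(G-v,a)$ and $(G-N[v],a)$ to have defect $0$, so equality holds throughout the matching structure of Step~3. This rigidity pins down the local picture at $v$ --- essentially $N[v]$, together with how it attaches to $G-N[v]$, must look like the corresponding stretch of an odd cycle --- and forces $\delta=1$, contradicting $\delta>1$.
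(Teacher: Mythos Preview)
The paper does not prove this theorem; it is quoted from Lipt\'ak and Lov\'asz~\cite{LL01} and invoked as a black box (to obtain Corollary~\ref{cor-fdg-bound} and to bound $|\mathcal{B}_\lambda|$ in the proof of Corollary~\ref{cor-fdg-basis}). There is therefore no proof in the paper to compare your proposal against.

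On its own merits, your reduction in Step~1 is correct: from the facet hypothesis one does get a maximum-weight stable set through $v$ and one avoiding $v$, hence $\alpha(G-v,a)=\alpha$ and $\alpha(G-N[v],a)=\alpha-a(v)$, and the chain $\deg(v)\le a(N(v))\le a(v)+\delta\le 2\delta$ is then exactly the assertion that the weighted graphs $(G-N[v],a)$ and $(G-v,a)$ have nonnegative defect. The gap is Step~3. Neither of those induced weighted graphs is in general critical or a facet-graph, so no off-the-shelf ``defect $\ge 0$'' lemma applies to them; you propose to certify nonnegativity directly via a fractional matching from $C=V\setminus S$ into a maximum-weight stable set $S$ that saturates $S$ and survives the deletion of $N[v]$, but you do not construct it, and you explicitly flag this construction as ``the main obstacle.'' As written, Step~3 is an assertion rather than an argument, so the proposal is a plan, not a proof. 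Step~4 is similarly schematic: the equality case may indeed force an odd-cycle-like local structure and hence $\delta=1$, but nothing concrete is established.
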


Let $(G,a)$ be a weighted graph $(G,a)$ and $e$ be one
of its edges. The {\em strength} of the edge $e$ is defined as 
$\alpha(G-e,a)-\alpha(G,a)$. Notice that if $(G,a)$ is a 
\cfg{} then the strength of any of its edges is positive.
Consider now the following operation on $(G,a)$:
select some of its edges, and replace each with a path 
of length 3 where the two new vertices have weight equal 
to the strength of the edge. The resulting weighted graph is 
referred to as an {\em elementary \os} of $(G,a)$. 
We say that a weighted graph is an {\em \os} of $(G,a)$ if 
it is obtained from $(G,a)$ by applying the operation finitely 
many times.

\begin{lemma}[Wolsey~\cite{W76}]
\label{lem-cfg-os}
Every elementary \os{} of a \cfg{} is again a \cfg{} with 
the same defect. The three new edges have the same strength
as the edge they replace.
\end{lemma}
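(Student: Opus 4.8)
I would write $e=uv$ for the subdivided edge, $s=\alpha(G-e,a)-\alpha(G,a)>0$ for its strength, $p,q$ for the two new vertices (so $e$ becomes the path $u,p,q,v$) with $a'(p)=a'(q)=s$, and $(G',a')$ for the resulting weighted graph; set $n=|V(G)|$, so $|V(G')|=n+2$. The first step is the identity $\alpha(G',a')=\alpha(G,a)+s$. For ``$\ge$'', take a maximum weight stable set $S$ of $G-e$: it has weight $\alpha(G,a)+s$ and, since $(G,a)$ is critical, it must contain both $u$ and $v$ (otherwise it would already be stable in $G$), so it remains stable in $G'$ and uses no new vertex. For ``$\le$'', I split a stable set of $G'$ according to how it meets $\{p,q\}$: it contains at most one of them, and deleting that vertex leaves a stable set of $G-e$ (if it met neither) or of $G$ (otherwise), whose weight is at most $\alpha(G,a)+s$ once the at most $s$ removed is added back. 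The identity immediately gives that the defect is preserved, since $a'(V(G'))-2\alpha(G',a')=(a(V(G))+2s)-2(\alpha(G,a)+s)=\delta$.

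The crux of the proof — and the step I expect to be the real obstacle — is an auxiliary fact that genuinely uses the facet hypothesis (it already fails for critical weighted graphs that are not facet-graphs): \emph{for every edge $uv$ of a facet-graph $(G,a)$ there is a maximum weight stable set of $G$ containing neither $u$ nor $v$}. Indeed, if every maximum weight stable set met $\{u,v\}$, then — since $uv\in E(G)$ rules out both — every vertex of the facet $F=\{x\in\STAB(G):\sum_v a(v)x_v=\alpha(G,a)\}$ would satisfy $x_u+x_v=1$, so $F\subseteq\{x_u+x_v=1\}$; as $F$ has dimension $n-1$ and $\{x_u+x_v=1\}\cap\STAB(G)$ is a proper face, the two coincide, and hence the rank inequality of $(G,a)$ is a positive multiple of $x_u+x_v\le 1$ — contradicting that the weights are positive on all of $V(G)$ and $|V(G)|\ge 3$.

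Granting this fact, the strengths of the new edges fall out. For $pq$: choosing a maximum weight stable set $S$ of $G$ avoiding $u$ and $v$, the set $S\cup\{p,q\}$ is stable in $G'-pq$ (where $p,q$ are no longer adjacent) with weight $\alpha(G,a)+2s=\alpha(G',a')+s$; the reverse bound is the same split as before, since deleting $p,q$ from a stable set of $G'-pq$ leaves a stable set of $G-e$. For $up$ and $qv$: to any maximum weight stable set of $G-e$ one can always append exactly one of $p,q$ and stay stable in $G'-e'$, so $\alpha(G'-e',a')=\alpha(G-e,a)+s=\alpha(G',a')+s$. Criticality at an old edge $e'\in E(G)\setminus\{e\}$ is similar: a maximum weight stable set $R$ of $G-e'$ has weight $>\alpha(G,a)$ by criticality of $(G,a)$, contains at most one of $u,v$, and adding the other side's new vertex gives a stable set of $G'-e'$ of weight $>\alpha(G',a')$. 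Connectedness of $G'$ is clear, so $(G',a')$ is a critical graph of defect $\delta$ whose three new edges have strength $s$.

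It remains to show $\sum_w a'(w)x_w\le\alpha(G',a')$ defines a facet of the full-dimensional polytope $\STAB(G')$, i.e.\ to exhibit $n+2$ affinely independent stable sets of $G'$ of weight $\alpha(G',a')$. Since $(G,a)$ is a facet-graph there are $n$ affinely independent maximum weight stable sets of $G$, and using the auxiliary fact I may pick the family so that one of them, $S_1$, avoids both $u$ and $v$. Turn each $S_i$ into a tight stable set $S_i'$ of $G'$ by adjoining $p$ if $u\notin S_i$ and $q$ otherwise; projecting incidence vectors onto $\R^{V(G)}$ shows $S_1',\dots,S_n'$ are affinely independent, and each lies on the hyperplane $x_p+x_q=1$. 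Then adjoin two more tight stable sets: a maximum weight stable set $T_0$ of $G-e$ (which contains $u,v$ and no new vertex, hence has $x_p+x_q=0$ and lies off that hyperplane), and the set $S_1\cup\{q\}$, which differs from $S_1'=S_1\cup\{p\}$ only in the two coordinates $p,q$. A short argument — intersect with $\{x_p+x_q=1\}$ to reduce the question to the affine hull of $S_1',\dots,S_n'$, then project onto $\R^{V(G)}$ and invoke affine independence of $S_1,\dots,S_n$ — shows $S_1\cup\{q\}$ is not in the affine hull of the preceding $n+1$ sets. Hence the $n+2$ sets are affinely independent, the inequality is facet-defining, and $(G',a')$ is a critical facet-graph of defect $\delta$ as claimed.
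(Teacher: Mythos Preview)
The paper does not give its own proof of this lemma; it simply cites Wolsey~\cite{W76}. So there is no ``paper's proof'' to compare against, and the question is just whether your argument is correct. It is.

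A few remarks. Your auxiliary fact---that every edge $uv$ of a \cfg{} is avoided by some maximum weight stable set---is indeed the place where the facet hypothesis enters, and your derivation of it (forcing the rank facet to coincide with the edge facet $x_u+x_v\le 1$, then contradicting $|V(G)|\ge 3$ and positivity of $a$) is clean. The computation of $\alpha(G',a')$ and of the strength of $pq$ via the case split on $\{p,q\}$ is fine; for the strengths of $up$ and $qv$ you only wrote the lower bound explicitly, but the matching upper bound is automatic since deleting an edge incident to a vertex of weight $s$ can raise $\alpha$ by at most $s$. Your construction of $n+2$ affinely independent tight stable sets is correct: the $n$ lifted sets $S_i'$ live on the hyperplane $x_p+x_q=1$, the set $T_0$ falls off it, and the final set $S_1\cup\{q\}$ is separated from the others by projecting to $\R^{V(G)}$ after intersecting with $x_p+x_q=1$---exactly as you outlined.

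In short: the paper outsources the proof to the literature; your self-contained argument is correct and would serve as a proof.
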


The following result generalizes Lov\'asz's finite basis 
theorem for \ac{} graphs (Theorem~\ref{th-acg-basis}).

\begin{theorem}[Lipt\'ak and Lov\'asz~\cite{LL00}]
\label{th-cfg-basis}
For every integer $\delta \ge 1$, there exists a finite collection of {\cfg s} 
such that every {\cfg} with defect $\delta$
is an {\os} of a graph in the collection.
\end{theorem}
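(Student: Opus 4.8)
The plan is to follow the blueprint of Lov\'asz's proof of Theorem~\ref{th-acg-basis}. For a threshold $t=t(\delta)$ to be fixed later, call a \cfg{} of defect $\delta$ \emph{reduced} if all of its \emph{threads} (maximal paths all of whose interior vertices have degree $2$) have length at most $t$. The argument splits into two halves: (i) every \cfg{} of defect $\delta$ is an \os{} of a reduced \cfg{} of defect $\delta$, and (ii) for each $\delta$ there are, up to isomorphism, only finitely many reduced \cfg s of defect $\delta$. Since taking an \os{} preserves defect by Lemma~\ref{lem-cfg-os}, the finite collection of reduced \cfg s of defect $\delta$ supplied by (ii) is exactly the one required by the theorem, via (i).

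For (i), I would show that a \cfg{} $(G,a)$ of defect $\delta$ having a thread longer than $t(\delta)$ is a \emph{proper} \os{} of a \cfg{} of defect $\delta$, and then iterate, the iteration terminating because $|V|$ strictly decreases. Along any thread the edge-strengths and the weights of the interior vertices lie in $\{1,\dots,2\delta\}$: the weight bound is Theorem~\ref{th-cfg-bound}, and the strength bound comes from $\alpha(G-e,a)-\alpha(G,a)\le\min\{a(u),a(v)\}$ for $e=uv$ (take a maximum-weight stable set of $G-e$ containing both endpoints of $e$ and delete one of them). Using criticality at the degree-$2$ interior vertices, one argues that this data evolves along a thread according to a recursion with boundedly many states, so that a thread longer than $t(\delta)$ must contain three consecutive edges of a common strength $s$ whose two interior vertices have degree $2$ and weight $s$ --- which is precisely a subdivided edge. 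Deleting those two vertices and re-inserting the edge (the operation inverse to an elementary \os{}) then produces a smaller \cfg{} of the same defect; that this operation does send a \cfg{} to a \cfg{} is the converse of Lemma~\ref{lem-cfg-os}, proved by essentially the same argument as Wolsey's lemma.

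For (ii), fix $\delta$ and a reduced \cfg{} $(G,a)$ of defect $\delta$. By Theorem~\ref{th-cfg-bound} every vertex has weight and degree at most $2\delta$, so there are only finitely many possible isomorphism types for the weighted neighbourhood of a vertex, and it suffices to bound $|V(G)|$. A \cfg{} has minimum degree at least $2$ (a leaf $v$ with neighbour $u$ would force the facet-defining inequality into the face $x_u+x_v=1$, contradicting that its support has size at least $3$); hence if $G$ has no vertex of degree $\ge 3$ then $G$ is a cycle, necessarily odd and with all weights $1$ since the facet-defining inequality has full support, and the only such $G$ that is not a proper \os{} of a smaller one is $K_3$. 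Otherwise let $B=\{v:\deg(v)\ge 3\}$; combining Theorem~\ref{th-cfg-bound} with a counting argument in the spirit of Lov\'asz's proof of Theorem~\ref{th-acg-basis} bounds $|B|$ and the number of edges meeting $B$ in terms of $\delta$. The vertices outside $B$, all of degree exactly $2$, lie on threads joining vertices of $B$, and each such thread contributes at most $t(\delta)$ vertices because $G$ is reduced; hence $|V(G)|$ is bounded by a function of $\delta$, and (ii) follows.

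The main obstacle is the structural ingredient of half (i): showing that every sufficiently long thread of a \cfg{} contains a subdivided edge. In the unweighted case this is immediate --- all strengths equal $1$, there are no weights, and any length-$3$ interior sub-path of a thread is itself a subdivided edge --- but in the weighted case one must understand precisely how the weight of a degree-$2$ vertex is tied to the strengths of its incident edges, and it is exactly this weighted bookkeeping that has no unweighted counterpart. A secondary, more routine obstacle is bounding the number of vertices of degree $\ge 3$, which should follow by adapting Lov\'asz's counting arguments (the ones showing that complete graphs are extremal among {\ac} graphs of a given defect). An alternative route --- and, since Theorem~\ref{th-acg-basis} is advertised as a key ingredient in the original proof, plausibly the intended one --- is to reduce directly to the unweighted theorem by replacing each vertex $v$ of a \cfg{} by an appropriate gadget on about $a(v)$ vertices, so as to produce a connected {\ac} graph whose defect is bounded in terms of $\delta$ and in which \os{}s of the \cfg{} correspond to \os{}s of the constructed graph, then transferring finiteness back; there the difficulty shifts to designing a gadget that simultaneously preserves the {\ac} property and is compatible with taking \os{}s.
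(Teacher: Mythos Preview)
The paper does not give its own proof of this theorem; it is quoted as a result of Lipt\'ak and Lov\'asz, and in Section~\ref{sec-basis} their argument is summarised in one sentence: every \cfg{} is the image of an \ac{} graph with the same defect under a well-behaved homomorphism, after which Theorem~\ref{th-acg-basis} finishes the job. Your ``alternative route'' in the final paragraph is exactly this reduction (constructing an \ac{} graph from the \cfg{} by gadget-replacement is the inverse view of exhibiting the \cfg{} as a homomorphic image), so you have correctly guessed the intended line.

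Your primary route, however, has a serious gap that you mislabel. In step~(ii) you write that ``combining Theorem~\ref{th-cfg-bound} with a counting argument in the spirit of Lov\'asz's proof of Theorem~\ref{th-acg-basis} bounds $|B|$''. But bounding the number of vertices of degree at least~$3$ \emph{is} the theorem: Theorem~\ref{th-cfg-bound} only bounds individual degrees and weights, which is perfectly compatible with $|B|$ being arbitrarily large, and the whole of Section~\ref{sec-basis} is devoted to establishing the analogous bound already in the unweighted case. In Lipt\'ak and Lov\'asz's proof it is precisely this bound that forces the homomorphism lemma and the reduction to \ac{} graphs --- the paper calls that lemma ``the main step of their proof''. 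So what you call a ``secondary, more routine obstacle'' is in fact where all the content lives, and once you face it your primary route collapses into your alternative route anyway. The obstacle you flag in step~(i) --- locating a genuinely subdivided edge inside a long thread and verifying that the converse of Lemma~\ref{lem-cfg-os} holds --- is real as well (a repeated ``state'' along a thread does not by itself produce three consecutive edges of strength $s$ with interior vertices of weight exactly $s$), but it is not where the depth of the theorem lies.
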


Such a collection of graphs is (explicitly) known for 
$\delta=1,2$ only. Using Theorem~\ref{th-cfg-bound}, it is 
not difficult to check that {\cfg s} with defect 1 are the 
odd cycles with the all-one weighting, that is, the {\os s} 
of $(K_{3}, \one)$.  
For $\delta=2$, Sewell~\cite{S90} proved the following.

\begin{theorem}[Sewell~\cite{S90}]
\label{th-cfg-basis-def2}
Every {\cfg} with defect $2$ is an {\os} of one of the graphs 
depicted in Figure~\ref{fig-cfg-basis-def2}.
\end{theorem}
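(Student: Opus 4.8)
The plan is a reduce-to-irreducible argument followed by a finite check. Let $(G,a)$ be a \cfg{} of defect $2$. Applying Theorem~\ref{th-cfg-bound} with $\delta = 2$, and using $\delta > 1$, every vertex $v$ satisfies $\deg(v) \le 2\delta - 1 = 3$ and, from $a(v)+\delta \le 2\delta$, also $a(v) \le 2$; so all weights lie in $\{1,2\}$ and $G$ has maximum degree $3$. A short decomposition argument further shows that a \fg{} has no bridge --- across a bridge $\STAB(G)$ factors as a product, so the only facet with full support would be a single edge inequality, impossible once $|V(G)| \ge 3$ --- whence $G$ has minimum degree $2$ and, unless $G$ is itself a cycle (a case quickly disposed of), $G$ is a subdivision of a (multi)graph $H$ all of whose vertices have degree $3$ in $G$.

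By Lemma~\ref{lem-cfg-os} an elementary \os{} of a defect-$2$ \cfg{} is again a defect-$2$ \cfg{}, the three new edges inheriting the strength of the edge they replace, and \os{}s compose; hence it suffices to prove the theorem for \cfg{}s admitting no \invos{}, and then transfer the conclusion back along the chain of \invos{}s reducing $(G,a)$ to such a graph. So assume $(G,a)$ is \invos{}-irreducible. The heart of the matter is to bound $|V(G)|$ by an absolute constant, in two stages. \emph{Short fibres}: each maximal path of $G$ through internal degree-$2$ vertices, and each induced cycle of degree-$2$ vertices, must be short. One first checks that $s_e \le \min(a(x),a(y))$ for every edge $e = xy$ (take a maximum-weight stable set of $G-e$ containing both endpoints and delete one of them); thus a weight-$1$ degree-$2$ vertex has both incident strengths equal to $1$, and two consecutive such vertices give a shrinkable sub-path $p\,u\,w\,q$. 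Weight-$2$ degree-$2$ vertices require a finer argument (strengths being constant along degree-$2$ paths, long runs are ruled out separately); either way, a sufficiently long fibre produces a \invos{}, contradicting irreducibility. \emph{Few branches}: $|V(H)|$ is bounded, which one gets either from the structural estimates behind Theorem~\ref{th-cfg-basis} specialised to $\delta = 2$, or by a direct count of tight stable sets against the affine dimension $|V(G)| - 1$ of the facet. Together these cap $|V(G)|$.

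Once $|V(G)|$ is bounded, the proof finishes by enumeration: list the connected graphs of bounded order with maximum degree $3$, together with all $\{1,2\}$-valued weightings; keep the pairs that are \cfg{}s of defect $2$ (checking criticality and the facet property); discard those admitting a \invos{}; and verify that the survivors are exactly the graphs of Figure~\ref{fig-cfg-basis-def2}. I expect the short-fibres/few-branches step to be the real obstacle. Turning ``\invos{}-irreducible'' into a genuine size bound is delicate: a \invos{} requires not merely a degree-$2$ path $p\,u\,w\,q$ with $a(u)=a(w)$ but also that the common strength of its three edges equals that weight, so one must control how strengths behave along fibres --- in particular at weight-$2$ vertices --- and at the degree-$3$ vertices of $H$. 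This is exactly the kind of local-to-global control the \mws{} machinery of the \cfg{} theory is built to supply, but its careful deployment, together with the bound on $|V(H)|$, is where the work concentrates.
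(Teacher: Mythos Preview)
First, note that the paper does not prove Theorem~\ref{th-cfg-basis-def2}: it is quoted from Sewell~\cite{S90} and used as input. So there is no ``paper's own proof'' to compare against; I can only assess whether your outline stands on its own.

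It does not, and you effectively say so yourself. The reduce-to-irreducible strategy is sound in spirit, and your opening bounds ($\deg(v)\le 3$, $a(v)\in\{1,2\}$) are correct consequences of Theorem~\ref{th-cfg-bound}. But the two load-bearing steps are not carried out. For ``few branches'' you appeal to ``the structural estimates behind Theorem~\ref{th-cfg-basis}''; as stated, Theorem~\ref{th-cfg-basis} is a pure existence result and yields no explicit bound on $|V(H)|$, so nothing can actually be enumerated from it --- and historically Sewell's theorem \emph{precedes} Theorem~\ref{th-cfg-basis}, so a proof meant to recover his result should not lean on it. For ``short fibres'' your weight-$1$ argument is fine (indeed $s_e\le\min(a(x),a(y))$ forces strength $1$ along a run of weight-$1$ degree-$2$ vertices), but the weight-$2$ case, which you flag as needing ``a finer argument'', is precisely where the content lies: a degree-$2$ vertex of weight $2$ may have incident strengths $1$ or $2$, and you must exclude arbitrarily long patterns that evade both the weight-$1$ and the weight-$2$ \invos{}. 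You also use, without justification, that a \invos{} of a \cfg{} is again a \cfg{}; Lemma~\ref{lem-cfg-os} only gives the forward direction.

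In short, the skeleton is reasonable, but what you have is a plan rather than a proof. Your closing paragraph is accurate: the local-to-global control of strengths along fibres and an explicit bound on the number of branch vertices are essentially the whole theorem, and neither is supplied here.
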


\begin{figure}
\centering
\includegraphics[width=0.6\textwidth]{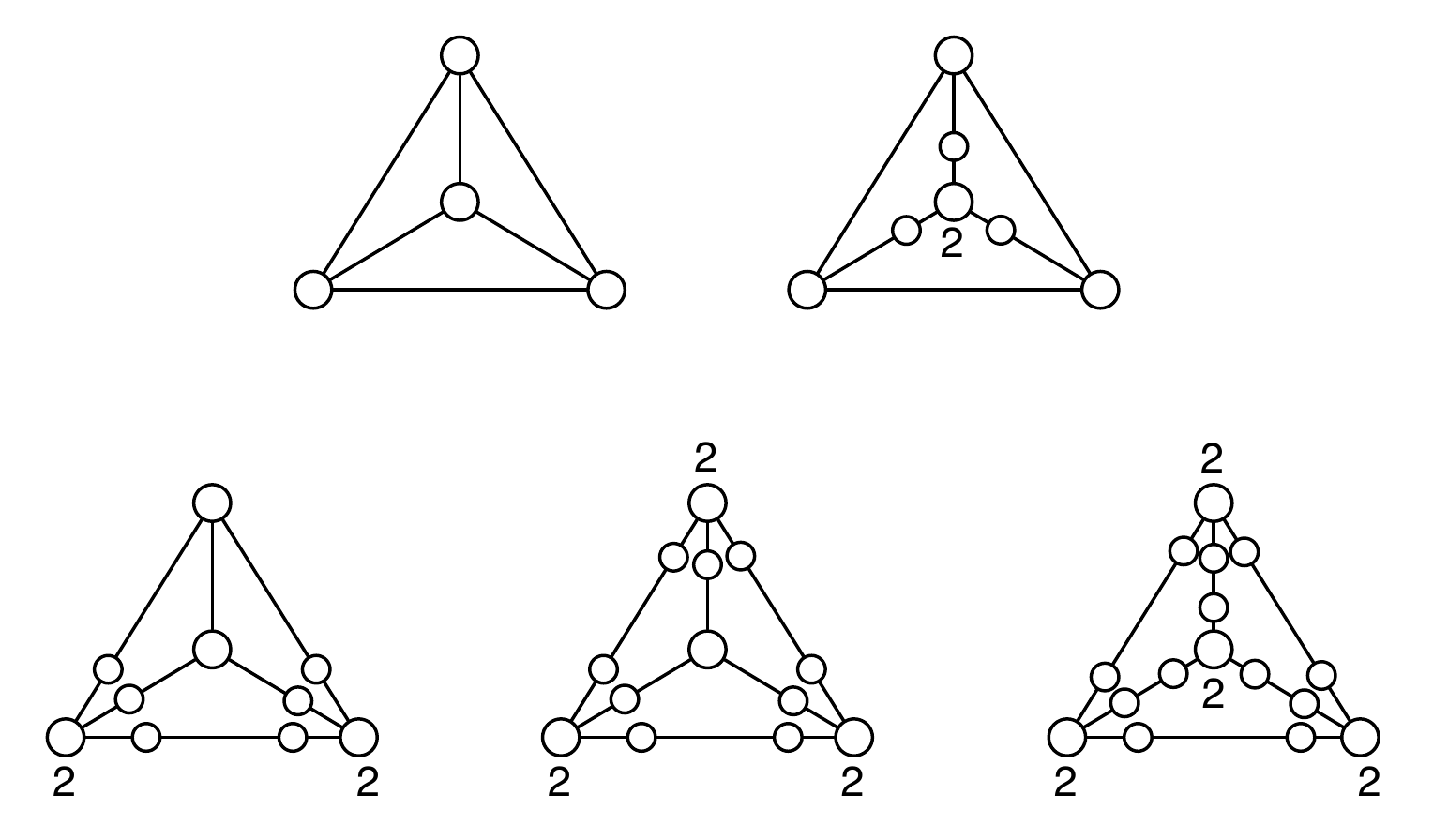}
\caption{\label{fig-cfg-basis-def2}A basis for {\cfg s} with defect 2 (only weights
different from 1 are indicated).}
\end{figure}

\subsection{The linear ordering polytope and \fdg{}s}

Given a complete directed graph with nonnegative weights on its arcs, 
the {\em linear ordering problem} asks to layout the vertices of the
graph on an oriented line in such a way that the total weight of the
arcs going from left to right is maximized. More precisely, solving 
the linear ordering problem consists in finding a strict linear ordering 
(that is, a spanning acyclic subtournament) of maximum total weight in
a given weighted complete directed graph. The 0/1-polytope 
naturally associated to this problem is known as the linear
ordering polytope. Let $N$ and $A$ respectively denote the node 
and arc set of the complete directed graph given as input and let 
$n = |N|$. Then the {\em linear ordering polytope} $\Plo^N$ (we sometimes
denote it simply by $\Plo^n$) is the integer hull of the polytope 
$$
Q = \{x \in \R^A \mid x_{ij} + x_{jk} + x_{ki} \le 2 \ \forall \{ij, jk, ki\}
\subseteq A,\ x_{ij} + x_{ji} = 1\ \forall ij \in A,\ x_{ij} \le 1\ 
\forall ij \in A\}.
$$
Equivalently, the linear ordering polytope is the convex hull of the
incidence vectors of all strict 
linear orderings contained in $D = (N,A)$. The literature dealing
with the polyhedral structure of the linear ordering polytope is 
quite abundant (with an approximate number of 50 references),
although not as abundant as the literature on the stable set 
polytope. A prominent class of facets for this former polytope 
are the so-called fence inequalities which where independently
discovered by Gr\"otschel, J\"unger and Reinelt~\cite{GJR85-lop}
and Cohen and Falmagne~\cite {CF90}. They were generalized 
in two different ways by Leung and Lee~\cite{LL94} (also Suck~\cite{S92})
and Koppen~\cite{K95}. Then the authors of the present paper
proposed a further generalization unifying the two generalizations 
mentioned above, following an idea of Christophe, Doignon and 
Fiorini~\cite{CDF04}. The resulting class of inequalities is 
known as the {\em graphical inequalities}. 
We give a definition of these inequalities in the next paragraph.
To avoid any confusion,
let us emphasize that, while arc-weighted directed graphs briefly appeared
in the definition of the linear ordering problem, all weighted graphs considered
in the sequel will be vertex-weighted undirected  graphs (as in Section~\ref{sec-SS-cfg}).

The {\em worth} of a subset $S$ of vertices of a weighted graph 
$(G,a)$ is defined as $a(S) - ||S||$, where $||S|| = |E(G[S])|$
denotes the number of edges of $G$ with both ends in the set $S$. 
The maximum worth of a set of vertices in $(G,a)$ is denoted by 
$\talpha(G,a)$. In other words, we let
$$
\talpha(G,a) := \max_{S \subseteq V(G)} \{ a(S) - ||S|| \}.
$$
Notice $\talpha(G,a) \ge \alpha(G,a)$ because $||S|| = 0$ 
whenever $S$ is a stable set. As precedingly, let $V$ and $E$ 
respectively denote the vertex and edge set of $G$. Suppose that 
$N$ contains $V$ and, furthermore, a set $V'$ disjoint from $V$ 
and of the same cardinality. Let $v \mapsto v'$ denote any 
bijection from $V$ onto $V'$. The {\em graphical inequality}
defined by $(G,a)$ then reads
\begin{equation}
\label{eq-graphical}
\sum_{v \in V} a(v)\,x_{vv'} - \sum_{vw \in E} (x_{vw'} + x_{wv'}) 
\le \talpha(G,a).
\end{equation}
A weighted graph $(G,a)$ is  a {\em \fdg} if the
corresponding graphical inequality defines a facet of the linear 
ordering polytope (as stated above, we also assume $|V(G)| \geq 3$ 
and $a(v) > 0$ for all $v\in V(G)$). Suppose for a moment that 
$a(v) = 1$ for all vertices $v$. That is, $a$ is the all-one 
function $\one$. Koppen \cite{K95} showed that in this case 
$(G,a)$ is facet-defining precisely when $G$ is a connected 
\ac{} graph distinct from $K_2$. This result is reminiscent of the 
aforementioned result of Chv\'atal \cite{C75} on the stable set polytope. 
This is not a coincidence, as we now explain. 

\begin{theorem}[Fiorini~\cite{F06-recycle}, Corollary~16]
\label{th-sam}
Let $(G,a)$ be a \cfg{} with $G=(V,E)$. As above, assume that 
$V$ is contained in $N$ and $v \mapsto v'$ is a bijection between
$V$ and a subset $V'$ of $N$ which is disjoint from $V$. Finally, 
for an edge $e \in E$, let $s(e)$ denote its strength. Then there 
exists a unique integer $\gamma$ such that the inequality
\begin{equation}
\label{eq-recycle}
\sum_{v \in V} a(v)\,x_{vv'} - \sum_{vw \in E} 
s(vw)\,(x_{vw'} + x_{wv'}) \le \gamma
\end{equation}
is facet-defining for the linear ordering polytope.
\end{theorem}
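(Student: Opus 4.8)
The plan is to take for $\gamma$ the maximum of the left-hand side of~\eqref{eq-recycle} over the vertices of $\Plo^N$ — equivalently, over the linear orderings of $N$ — and then to show that~\eqref{eq-recycle} is facet-defining for this value. With this choice, validity of~\eqref{eq-recycle} and integrality of $\gamma$ are automatic, and~\eqref{eq-recycle} is tight at some vertex; uniqueness is automatic as well, since a facet-defining inequality of this shape is tight at a vertex, forcing its right-hand side to equal that maximum. So everything reduces to the facet claim.

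To get hold of the face $F$ that~\eqref{eq-recycle} defines, I would first identify $\gamma$ combinatorially. Write $g(\sigma)$ for the value of the left-hand side of~\eqref{eq-recycle} at the vertex $x^\sigma$ of a linear ordering $\sigma$ of $N$, and put $S_\sigma:=\{v\in V: v\prec_\sigma v'\}$. Because $(G,a)$ is \emph{critical}, every edge has positive strength, and a one-line check then shows that for an edge $vw\in E$ one cannot have $v\prec_\sigma v'$, $w\prec_\sigma w'$, $w'\prec_\sigma v$ and $v'\prec_\sigma w$ all at once; hence
\[
g(\sigma)\ \le\ a(S_\sigma)-\!\!\sum_{e\in E(G[S_\sigma])}\!\!s(e)\ \le\ \max_{S\subseteq V}\Bigl(a(S)-\!\!\sum_{e\in E(G[S])}\!\!s(e)\Bigr),
\]
and for any $S=\{v_1,\dots,v_k\}$ the ordering listing first the elements of $(V\setminus S)'$, then $v_1,v_1',\dots,v_k,v_k'$, then those of $V\setminus S$, then those of $N\setminus(V\cup V')$ (each block internally arbitrary) attains that last bound. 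So $\gamma=\max_{S\subseteq V}\bigl(a(S)-\sum_{e\in E(G[S])}s(e)\bigr)\ge\alpha(G,a)$; call the maximizing sets \emph{optimal} (so every maximum-weight stable set is optimal exactly when $\gamma=\alpha(G,a)$). Tracing equality through the display, a linear ordering $\sigma$ lies on $F$ precisely when $S_\sigma$ is optimal, $x^\sigma_{vw'}+x^\sigma_{wv'}=1$ for every edge $vw$ of $G[S_\sigma]$, and $x^\sigma_{vw'}=x^\sigma_{wv'}=0$ for every edge $vw$ of $G$ meeting $V\setminus S_\sigma$.

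Next I would prove $\dim F=\binom{|N|}{2}-1=\dim\Plo^N-1$ in the standard way: let $b^{\top}x\le b_0$ be valid for $\Plo^N$ and tight at every $\sigma\in F$; I claim $(b,b_0)$ is a combination of the coefficient vector of~\eqref{eq-recycle} together with $\gamma$, and of the trivial equalities $x_{ij}+x_{ji}=1$ (which span the lineality of $\mathrm{aff}(\Plo^N)$). The workhorse is the exchange lemma: if two orderings on $F$ differ only by swapping two consecutive elements $i,j$, then $b_{ij}=b_{ji}$. One thus has to (i) produce, for every unordered pair $\{i,j\}$ on which~\eqref{eq-recycle} carries no asymmetry — that is, all pairs except the $|V|$ pairs $\{v,v'\}$ and the pairs $\{v,w'\},\{w,v'\}$ with $vw\in E$ — two orderings on $F$ related by the adjacent transposition of $i,j$; and (ii) fix the remaining differences. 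For (ii) one uses that $(G,a)$ is a \fg{}: it supplies a family of optimal sets whose incidence vectors affinely span a hyperplane of $\R^V$ (when $\gamma=\alpha(G,a)$ one may just take $|V|$ affinely independent maximum-weight stable sets), each realized as $S_\sigma$ for some $\sigma\in F$; running chains of adjacent transpositions through $F$ between such orderings and summing the relations then forces $b_{vv'}-b_{v'v}=\lambda\,a(v)$ for a common $\lambda>0$, and likewise $b_{vw'}-b_{w'v}=b_{wv'}-b_{v'w}=-\lambda\,s(vw)$ for every edge $vw$ (the strength appears because, along such a chain, moving $w'$ past $v$ must be balanced by a change of $S_\sigma$ of controlled weight). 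Finally $b_0$ is read off at any $\sigma\in F$; connectedness of $G$ is what keeps the relations from decoupling.

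The main obstacle is the pair of tasks (i)–(ii): showing that $F$ really does contain enough linear orderings — chained by single adjacent transpositions and collectively realizing a spanning family of optimal sets as the $S_\sigma$ — to force every coefficient. Concretely one must check that membership in $F$ is robust under the small reorderings required, exploiting that its defining conditions leave considerable slack (the positions of $V'$, of $N\setminus(V\cup V')$, and of the non-neighbours can be permuted quite freely), and that there are indeed enough optimal sets ``in general position'' (this may itself require first establishing $\gamma=\alpha(G,a)$). A clean way to organize everything is to factor the passage from $\STAB(G)$ to $\Plo^N$ through a surjective affine map and appeal to the general principle that, under a surjective affine map of polytopes whose fibres have locally constant full dimension over both the polytope and a given facet, that facet pulls back to a facet; the content is then exactly that the fibre dimension does not drop over the facet of $\STAB(G)$ cut out by $(G,a)$ — which is where one uses, beyond the positivity of strengths already invoked, the full hypothesis that $(G,a)$ is a critical facet-graph.
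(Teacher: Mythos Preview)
The paper itself does not prove Theorem~\ref{th-sam}; it simply quotes it from Fiorini~\cite{F06-recycle}. There is therefore no ``paper's own proof'' to compare against directly. That said, a comparison with the method of~\cite{F06-recycle} is instructive.

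Your primary approach is the direct one: compute $\gamma$ as a set function, characterize the tight orderings, and then pin down an arbitrary tight valid inequality by adjacent-transposition chains. This is a sound outline, and the identification of $\gamma$ and of the face $F$ is correct. However, you have correctly flagged the two genuinely hard steps yourself. First, the equality $\gamma=\alpha(G,a)$, i.e.\ $a(S)-\sum_{e\in E(G[S])}s(e)\le\alpha(G,a)$ for all $S$, is equivalent to a subadditivity statement $\alpha(G-E',a)-\alpha(G,a)\le\sum_{e\in E'}s(e)$ that is not automatic and needs the facet-graph hypothesis in an essential way; without it you do not know that maximum-weight stable sets (and the ``critical'' sets witnessing each $s(vw)$) are optimal, and the whole machinery of step~(ii) stalls. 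Second, the sentence ``the strength appears because \dots'' is not yet an argument: the pairs $\{v,w'\}$ with $vw\in E$ are precisely those for which no single adjacent swap stays on $F$, so one has to design specific chains that isolate $b_{vw'}-b_{w'v}$ and make the quantity $s(vw)$ drop out. This can be done (using a maximum stable set $T$ of $G-vw$ with $a(T)=\alpha(G,a)+s(vw)$ as an optimal $S_\sigma$ and comparing with orderings realizing stable sets), but it is the crux and deserves to be written out rather than asserted.

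What is worth noting is that your closing paragraph, offered as a tidier alternative, is essentially the proof in~\cite{F06-recycle}: Fiorini sets up a projection from (an affine image of) $\Plo^N$ onto $\STAB(G)$ and shows that facets lift along it provided the fibre dimension is constant over the facet; Theorem~\ref{th-sam} is then literally a corollary of that framework. So the ``clean way to organize everything'' you mention is not merely bookkeeping---it is the actual published route, and it bypasses both obstacles above by never having to compute $\gamma$ or argue transposition by transposition. If you want to pursue the direct argument, you should regard proving $\gamma=\alpha(G,a)$ from the \cfg{} hypothesis as the first real lemma, and then carry out step~(ii) explicitly; if not, the projection argument is both shorter and the one on record.
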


A \cfg{} $(G,a)$ is said to be {\em $k$-critical} if the
strength of any of its edges is at most $k$. Suppose that 
$(G,a)$ is a $1$-\cfg{} and consider inequality 
\eqref{eq-recycle}. Because the strength of every 
edge of $(G,a)$ equals $1$, the left-hand side of 
Eq.~\eqref{eq-recycle} equals the left-hand side of 
Eq.~\eqref{eq-graphical}, that is, the graphical 
inequality associated to $(G,a)$. It follows that 
$\gamma = \talpha(G,a)$ and thus Eq.~\eqref{eq-recycle} 
is a facet-defining graphical inequality and $(G,a)$ 
is a \fdg. This shows that 
$1$-\cfg{}s are always {\fdg s}.
In the next section we prove that, conversely, 
any \fdg{} $(G,a)$ has a \uos{} which is a $1$-\cfg{}. 

\section{The connection and some of its consequences}
\label{sec-connection} 

In this section we state and prove our main result
which relates \fdg{}s to $1$-\cfg{s}. We then derive
new results on \fdg{}s from Theorems \ref{th-cfg-bound}
and \ref{th-cfg-basis}. Thus, in particular, we derive
a finite basis theorem for \fdg{}s. At the end of the 
section, we provide the basis for subdefects $1$ and $2$.

Let $(G,a)$ be an arbitrary weighted graph. The {\em subdefect} 
$(G,a)$ is defined as $\lambda = a(V(G)) - 2\talpha(G,a)$. Notice 
that the subdefect of a weighted graph never exceeds its defect
(hence the name).
A {\em \uos} of $(G,a)$ is any graph obtained from $(G,a)$ 
by replacing edges with odd-length paths, where the new vertices 
have weight 1. 
Conversely, a graph $(G',a')$ is said to be a {\em \invos} 
of $(G,a)$ if $(G,a)$ is a {\uos} of $(G',a')$.

The following properties of {\fdg s} were proved in~\cite{CDF04} 
(see also~\cite{DFJ06}).

\begin{lemma}[Christophe, Doignon and Fiorini \cite{CDF04}]
\label{lem-prop}
Let $(G,a)$ be a {\fdg}. Then
\begin{enumerate}[(A)]
\item \label{system}the only solution to the system 
$$
\left\{ \sum_{v\in T} y_{v} +  \sum_{e\in E(T)} y_{e} = \talpha(G,a) \mid 
T\subseteq V(G),\ T \textrm{ maximum worth set}  \right\}
$$
is the trivial solution: $y_{v} = a(v)$ for all $v\in V(G)$, $y_{e} = -1$ for all $e\in E(G)$;
\item \label{TScontains} 
for every $uv\in E(G)$ and $X \subseteq \{u,v\}$, there exists a 
maximum worth set $T\subseteq V(G)$ with $T \cap \{u,v\} = X$;
\item \label{deg} 
$\deg(v) \geq 2$ for every $v\in V(G)$; 
\item \label{OS} any {\uos} of $(G,a)$ is also facet-defining with the same subdefect, and 
\item \label{invOS} any {\invos} of $(G,a)$ is also facet-defining with the same 
subdefect.
\end{enumerate}
\end{lemma}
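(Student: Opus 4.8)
The plan is to reduce all five parts to one combinatorial description, namely of the linear orders of $N$ that attain equality in the graphical inequality~\eqref{eq-graphical}, used together with the standard consequence of the {\fdg} hypothesis that every linear equation holding on all such orders is a linear combination of~\eqref{eq-graphical} and the equations $x_{ij} + x_{ji} = 1$ valid on all of $\Plo^N$. First I would establish the description. For a linear order $L$ (write $v \prec w$ when $v$ precedes $w$) put $T_L := \{v \in V(G) : v \prec v'\}$; then $\sum_v a(v)\,x_{vv'} = a(T_L)$ identically, and for every edge $vw$ one checks $x_{vw'} + x_{wv'} \ge 0$, with $x_{vw'} + x_{wv'} \ge 1$ whenever $v, w \in T_L$ (otherwise $w' \prec v$ and $v' \prec w$ would close a cycle with the relations $v \prec v'$, $w \prec w'$). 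Hence the left-hand side of~\eqref{eq-graphical} is at most $a(T_L) - \|T_L\| \le \talpha(G, a)$ --- which reproves validity --- and equality forces: \emph{$T_L$ is a {\mws}; every edge of $G[T_L]$ has $x_{vw'} + x_{wv'} = 1$; and every edge with at most one end in $T_L$ has $x_{vw'} = x_{wv'} = 0$.} Conversely these conditions suffice. I record the key point: on such a ``$T$-tight'' order, the numbers $x_{vv'}$ and $x_{vw'} + x_{wv'}$ depend only on $T = T_L$.

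For part~(A), given a solution $y$ of the displayed system define $d \in \R^A$ by $d_{vv'} = y_v$, $d_{vw'} = d_{wv'} = y_{vw}$ for $vw \in E(G)$, and $d = 0$ on all other arcs, so that $d$ is the coefficient vector of~\eqref{eq-graphical} exactly when $y_v = a(v)$, $y_e = -1$. By the description above, $\langle d, x^L \rangle = \sum_{v \in T} y_v + \sum_{e \in E(G[T])} y_e = \talpha(G, a)$ for every $T$-tight order $L$, so $\langle d, x \rangle = \talpha(G, a)$ holds on all tight orders; the {\fdg} hypothesis then gives $d = \mu\,c + s$ with $c$ the coefficient vector of~\eqref{eq-graphical}, $\mu$ a scalar and $s$ symmetric. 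Since the map $y \mapsto d$ is injective and its image meets the symmetric vectors only in $0$, we get $s = 0$, hence $y_v = \mu\,a(v)$, $y_e = -\mu$, and substituting into one equation of the system while using $\talpha(G, a) \ge \alpha(G, a) > 0$ forces $\mu = 1$.

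Parts~(B) and~(C) are handled by contradiction, each time exhibiting a linear equation valid on all tight orders but \emph{not} a combination of~\eqref{eq-graphical} and the equations $x_{ij} + x_{ji} = 1$; the latter is verified uniformly, since if a vector $f$ equals $\mu\,c + (\text{symmetric})$ then evaluating on an arc $ww'$ for some vertex $w$ untouched by $f$ (such $w$ exists as $|V(G)| \ge 3$) forces $\mu\,a(w) = 0$, hence $\mu = 0$, hence $f$ symmetric, contradicting the asymmetric $f$ we chose. For~(B): if no {\mws} meets $\{u, v\}$ in a given set $X$, then the values of $x_{uu'}$, $x_{vv'}$, $x_{uv'} + x_{vu'}$ on tight orders (read off from the description) satisfy an affine relation of this forbidden kind supported on the four arcs among $\{u, v, u', v'\}$ --- the four choices of $X$ yield $x_{uv'} = 0$, then $x_{uv'} + x_{vu'} = x_{uu'}$, then its mirror, then $x_{uv'} + x_{vu'} = x_{uu'} + x_{vv'} - 1$. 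For~(C): if $\deg(v) \le 1$, then either $v$ belongs to every {\mws} --- which holds whenever $\deg(v) = 0$, and whenever $\deg(v) = 1$ and $a(v) \ge 2$ --- giving the forbidden equation $x_{vv'} = 1$; or $\deg(v) = 1$ and $a(v) = 1$, in which case a short argument on the neighbour $u$ of $v$ shows no {\mws} avoids $\{u, v\}$, which is the last case of~(B).

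For parts~(D) and~(E), let $(G', a')$ be obtained from $(G, a)$ by replacing an edge $xy$ with a path $x\,p\,q\,y$ where $a'(p) = a'(q) = 1$, on node set $N' = N \cup \{p, q, p', q'\}$. A short {\mws} computation in the spirit of Lemma~\ref{lem-cfg-os} gives $\talpha(G', a') = \talpha(G, a) + 1$ --- a {\mws} of $(G, a)$ is enlarged by adjoining $p$, $q$ or both according to its intersection with $\{x, y\}$, and every {\mws} of $(G', a')$ arises this way --- and since $a'(V(G')) = a(V(G)) + 2$ the subdefect is preserved. For the facet statement, the plan is: first use this correspondence to describe the tight orders of the graphical inequality of $(G', a')$ as the tight orders of $(G, a)$ with $p, q, p', q'$ inserted near $x$ and $y$ in a prescribed pattern (and check that enough of them occur); then, given an equation $\langle d, x\rangle = \delta$ holding on all of them, slide $p, q, p', q'$ with the rest of the order fixed to determine $d$ on every arc incident to a new node, up to a scalar multiple of the coefficient vector of~\eqref{eq-graphical} for $(G', a')$ and the equations $x_{ij} + x_{ji} = 1$, after which the {\fdg} hypothesis on $(G, a)$ pins down the rest of $d$; part~(E) runs the same reasoning backwards, contracting a length-three path. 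I expect this last stage --- transporting the tight orders through the subdivision and then the arc-by-arc bookkeeping, including for the arcs $xy'$, $yx'$ whose coefficient changes under the subdivision --- to be the main obstacle, while parts~(A)--(C) are routine once the description of the tight orders is in hand.
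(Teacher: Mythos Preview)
The paper does not give its own proof of this lemma: it is quoted from \cite{CDF04} (with a pointer to \cite{DFJ06}) and used as a black box. So there is nothing in the present paper to compare your argument against.

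On its own merits, your approach is the standard one and is correct in outline. The characterization of tight linear orders via the set $T_L$ and the identity $x_{vw'}+x_{wv'}=[\,v,w\in T_L\,]$ on tight orders is exactly what drives all five parts in \cite{CDF04,DFJ06}. Your proof of (A) is complete; the step ``its image meets the symmetric vectors only in $0$'' is really the observation that $d-\mu c$ lies in the image of the linear map $y\mapsto d$ and is symmetric, forcing $y_v=\mu\,a(v)$ and $y_e=-\mu$, which you then finish correctly using $\talpha(G,a)>0$. Your treatment of (B) and (C) is also correct once one matches the four cases properly (your first listed relation ``$x_{uv'}=0$'' corresponds to $X=\{u,v\}$ forbidden, not $X=\varnothing$; the ordering of your list is $\{u,v\},\{v\},\{u\},\varnothing$).

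For (D) and (E) you give only a plan. The plan is the right one --- set up the bijection between {\mws s} of $(G,a)$ and of $(G',a')$, transport tight orders through it, and do the coefficient bookkeeping on arcs incident to $p,q,p',q'$ --- but the actual execution (in particular showing that enough tight orders survive to pin down every new coefficient, and handling the arcs $xy'$, $yx'$ whose role changes under subdivision) is precisely where the work lies in \cite{CDF04}. As written this part is an outline, not a proof; if you want a self-contained argument you will need to carry it out.
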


By Lemma \ref{lem-cfg-os}, the notions of \os{} and \uos{} 
coincide for $1$-\cfg{}s. The next lemma shows that this
is also the case for the defect and subdefect.

\begin{lemma}
\label{lem-alpha-beta}
If $(G,a)$ is a $1$-\cfg{} then its defect equals its subdefect, 
and hence $\alpha(G,a)=\talpha(G,a)$.
\end{lemma}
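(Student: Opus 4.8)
The plan is to reduce the statement to the single inequality $\talpha(G,a)\le\alpha(G,a)$. Indeed $\talpha(G,a)\ge\alpha(G,a)$ always holds, since $\|S\|=0$ for every stable set $S$; and as the defect and the subdefect of $(G,a)$ are respectively $a(V(G))-2\alpha(G,a)$ and $a(V(G))-2\talpha(G,a)$, the reverse inequality $\talpha(G,a)\le\alpha(G,a)$ gives at once $\talpha(G,a)=\alpha(G,a)$ and the equality of defect and subdefect.

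To bound $\talpha(G,a)$ I would work with a maximum worth set $T$ and the worth function $w(S):=a(S)-\|S\|$, using two ingredients. First, criticality: since $(G,a)$ is a $1$-\cfg{}, every edge $e=uv$ has strength $1$, so $G-e$ has a stable set $R$ of weight $\alpha(G,a)+1$; such an $R$ cannot be stable in $G$, hence contains both $u$ and $v$, so $G[R]$ is exactly the edge $uv$, $\|R\|=1$, and $w(R)=\alpha(G,a)$. Thus every edge lies in a set of worth exactly $\alpha(G,a)$ that induces nothing else. Second, submodularity: $a$ is modular and $S\mapsto\|S\|$ is supermodular (the slack of a pair $(S,T)$ being the number of edges joining $S\setminus T$ to $T\setminus S$), so $w(S\cup T)+w(S\cap T)\le w(S)+w(T)$. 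Now take $T$ with $w(T)=\talpha(G,a)$ and $\|T\|$ minimum; I claim $\|T\|=0$, which gives $\talpha(G,a)=a(T)\le\alpha(G,a)$. If $\|T\|\ge1$, pick $uv\in E(G[T])$ and the set $R$ above. Since $R\cap T$ contains $u,v$ and is stable in $G-uv$, we have $\|R\cap T\|=1$ and $w(R\cap T)\le\alpha(G,a)$; applying submodularity to $R$ and $T$ then forces equality throughout, whence no edge joins $R\setminus T$ to $T\setminus R$, so $E(G[R\cup T])=E(G[T])$ and finally $R\subseteq T$. Moreover minimality of $\|T\|$ forces $\deg_{G[T]}(x)\le a(x)-1$ for every $x\in T$, since otherwise $T\setminus\{x\}$ would be a maximum worth set with strictly fewer edges.

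The remaining — and hardest — step is to extract a contradiction from this configuration: a maximum worth set $T$ with $\|T\|\ge1$ in which every edge carries a near-stable witness of weight $\alpha(G,a)+1$. Purely local counting cannot suffice, because the weaker conditions above are consistent with $\talpha>\alpha$ for cfg's that are not $1$-critical (e.g.\ $(K_4,2\cdot\one)$), so this is where the full force of $(G,a)$ being a \cfg{}/\fdg{} must enter, for instance through parts~\ref{system} and~\ref{TScontains} of Lemma~\ref{lem-prop}, to rule out a maximum worth set that induces an edge. An appealing alternative that bypasses this difficulty is to invoke Theorem~\ref{th-sam}: for a $1$-\cfg{} the inequality~\eqref{eq-recycle} coincides with the graphical inequality~\eqref{eq-graphical}, so its unique facet-defining right-hand side satisfies $\gamma=\talpha(G,a)$; on the other hand the recycling construction underlying Theorem~\ref{th-sam} turns each maximum weight stable set of $(G,a)$ into a root of~\eqref{eq-recycle} of value $\alpha(G,a)$, so in fact $\gamma=\alpha(G,a)$, and $\talpha(G,a)=\alpha(G,a)$ follows. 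I would expect the write-up to take one of these two routes — the combinatorial one being self-contained but hinging on the delicate last step, the recycling one being very short once the value of $\gamma$ is pinned down.
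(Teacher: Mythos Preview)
Neither of your two suggested routes is the one the paper takes, and your first route is (as you acknowledge) incomplete at the decisive step.

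The paper's argument sidesteps the difficulty you isolate. It passes to a unit odd subdivision $(G',a')$ of $(G,a)$ in which \emph{every} edge of $G$ has been replaced by a path of length~$3$. Then every edge of $G'$ is incident to at least one vertex of weight~$1$, and for such weighted graphs $\alpha(G',a')=\talpha(G',a')$ is immediate: starting from any $S\subseteq V(G')$, repeatedly delete a weight-$1$ vertex having a neighbour in $S$; each deletion does not decrease the worth and eventually yields a stable set. Now Lemma~\ref{lem-cfg-os} gives that $(G',a')$ is a \cfg{} with the same defect as $(G,a)$; Theorem~\ref{th-sam} then makes $(G',a')$ a \fdg{}; and Lemma~\ref{lem-prop}\eqref{invOS} says $(G,a)$ is a \fdg{} with the same subdefect as $(G',a')$. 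Since defect and subdefect coincide for $(G',a')$, they coincide for $(G,a)$.

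Regarding your second route: it may well work, but as written it relies on the internals of Fiorini's recycling construction to identify $\gamma=\alpha(G,a)$, and that identification is not part of the statement of Theorem~\ref{th-sam} as quoted here; you would need to import the construction, not just the corollary. Your first route is genuinely stuck where you say it is: the configuration you reach (a maximum-worth $T$ with $\|T\|\ge 1$, every induced edge carrying a near-stable witness contained in $T$, and $\deg_{G[T]}(x)\le a(x)-1$) is not contradictory for \cfg{}s in general, and you do not supply the additional argument exploiting $1$-criticality or the facet system of Lemma~\ref{lem-prop}\eqref{system} to finish. The paper's subdivision trick avoids this obstacle entirely by moving to a graph where the combinatorial identity $\alpha=\talpha$ is trivial and then transporting both invariants back via their respective invariance lemmas.
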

\begin{proof}
Consider any (unit) {\os} $(G',a')$ of $(G,a)$ where no edge 
of $G$ remains. Because every edge in $(G',a')$ is incident 
to at least one new vertex, which all have weight $1$, we
have $\alpha(G',a')=\talpha(G',a')$. Indeed, any set 
$S \subseteq V(G')$ can be turned in a stable set whose
worth is at least that of $S$ by iteratively removing
any vertex of weight 1 adjacent to some other vertex in $S$. 
By Lemma~\ref{lem-cfg-os}, $(G',a')$ is a {\cfg} with 
the same defect as $(G,a)$. Now Theorem~\ref{th-sam} 
implies that $(G',a')$ is a {\fdg}. Then, by Lemma~\ref{lem-prop}\eqref{invOS}, 
$(G,a)$ is also a \fdg{} and has the same 
subdefect as $(G',a')$. Since the defect of $(G',a')$ 
equals its subdefect, we deduce that the same holds 
for $(G,a)$. The lemma follows.
\end{proof}

We now turn to the main contribution of this paper: a
precise connection between \fdg{}s and \cfg{}s.

\begin{prop}
\label{prop-corresp}
A weighted graph is facet-defining if and only if it is a {\invos} of a $1$-\cfg{}. Moreover, the subdefect of the former equals the defect of the latter.
\end{prop}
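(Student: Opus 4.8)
The plan is to prove the two directions of the equivalence separately, and then track the subdefect/defect correspondence along the way.

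For the ``if'' direction, suppose $(G,a)$ is a {\invos} of a $1$-\cfg{} $(G_0,a_0)$, so that $(G_0,a_0)$ is a {\uos} of $(G,a)$. I have already observed (just before the statement of Proposition~\ref{prop-corresp}) that every $1$-\cfg{} is a {\fdg}; hence $(G_0,a_0)$ is facet-defining. Since {\invos} is the inverse relation of {\uos}, the weighted graph $(G,a)$ is a {\invos} of the {\fdg} $(G_0,a_0)$, and Lemma~\ref{lem-prop}\eqref{invOS} immediately gives that $(G,a)$ is also facet-defining, with the same subdefect as $(G_0,a_0)$. Finally, Lemma~\ref{lem-alpha-beta} tells us the subdefect of the $1$-\cfg{} $(G_0,a_0)$ equals its defect, so the subdefect of $(G,a)$ equals the defect of $(G_0,a_0)$, which settles the ``moreover'' part in this direction.

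For the ``only if'' direction — the substantive one — suppose $(G,a)$ is a {\fdg}. The goal is to produce a {\uos} $(G_0,a_0)$ of $(G,a)$ that is a $1$-\cfg{}. The natural candidate is the ``full'' {\uos}: replace every edge $e$ of $G$ by a path of length $3$, inserting two new weight-$1$ vertices, so that in $(G_0,a_0)$ no original edge of $G$ survives and every edge of $G_0$ is incident with a new weight-$1$ vertex. By Lemma~\ref{lem-prop}\eqref{OS}, $(G_0,a_0)$ is again facet-defining for the linear ordering polytope, with the same subdefect as $(G,a)$; in particular $G_0$ is connected and has at least three vertices. I must now show two things: (i) $(G_0,a_0)$ is a \cfg{}, i.e. the inequality $\sum_{v} a_0(v)\,x_v \le \alpha(G_0,a_0)$ defines a facet of $\STAB(G_0)$ and every edge of $G_0$ has positive strength; and (ii) that \cfg{} is $1$-critical, i.e. every edge has strength exactly $1$. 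For (ii), because every edge of $G_0$ is incident with a weight-$1$ vertex, deleting that edge lets the weight-$1$ endpoint be added to a maximum stable set avoiding it, so $\alpha(G_0-e,a_0)\ge \alpha(G_0,a_0)+1$; the reverse inequality is immediate, so the strength is exactly $1$, which also forces it to be positive. The heart of the matter is (i): I want to derive ``$\sum a_0(v) x_v \le \alpha(G_0,a_0)$ is facet-defining for $\STAB(G_0)$'' from ``the graphical inequality of $(G_0,a_0)$ is facet-defining for $\Plo$''. Here Theorem~\ref{th-sam} runs the other way (\cfg{} $\Rightarrow$ graphical facet), so the key step is to exploit the special structure of $(G_0,a_0)$ — all new vertices have weight $1$, so $\alpha(G_0,a_0)=\talpha(G_0,a_0)$ by the argument in the proof of Lemma~\ref{lem-alpha-beta}, and the maximum worth sets of $(G_0,a_0)$ are exactly its maximum weight stable sets — and to translate the facetness of the graphical inequality back into a statement about enough affinely independent stable sets of $G_0$. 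Concretely, I expect to use Lemma~\ref{lem-prop}\eqref{system} and \eqref{TScontains} applied to $(G_0,a_0)$: part \eqref{system} gives that the only vector $y$ with $\sum_{v\in T} y_v + \sum_{e\in E(T)} y_e = \talpha(G_0,a_0)$ for all maximum worth sets $T$ is the trivial one, and since all edges of $G_0$ are ``pendant-like'' one can eliminate the edge variables $y_e$ and conclude that the maximum weight stable sets of $G_0$ affinely span the hyperplane $\sum a_0(v) x_v = \alpha(G_0,a_0)$, which is exactly the facet condition; part \eqref{TScontains} will be what guarantees the dimension count works out (it ensures maximum stable sets exist that include, and exclude, each endpoint of each edge). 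This reduction — turning a facet of $\Plo$ into a facet of $\STAB(G_0)$ by way of Lemma~\ref{lem-prop} and the $\alpha=\talpha$ coincidence — is the step I expect to be the main obstacle, and the place where the hypothesis that all inserted vertices carry weight $1$ is genuinely used.

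Once (i) and (ii) are in hand, $(G_0,a_0)$ is a $1$-\cfg{} and a {\uos} of $(G,a)$, so $(G,a)$ is a {\invos} of a $1$-\cfg{}, as required. For the ``moreover'' part in this direction: by Lemma~\ref{lem-alpha-beta} the $1$-\cfg{} $(G_0,a_0)$ has its defect equal to its subdefect, and by Lemma~\ref{lem-prop}\eqref{OS} the subdefect of $(G_0,a_0)$ equals the subdefect of $(G,a)$; chaining these equalities shows the subdefect of $(G,a)$ equals the defect of the witnessing $1$-\cfg{}. Combining both directions yields the proposition. A small remark to include: although the construction above picks one particular $1$-\cfg{} (the full {\uos}), Proposition~\ref{prop-corresp} as stated only asserts existence, so no uniqueness argument is needed; the defect of the witnessing $1$-\cfg{} is, however, forced to equal the subdefect of $(G,a)$ regardless of which witness is chosen, which is the content of the ``moreover'' clause.
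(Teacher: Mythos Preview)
Your proposal is correct and follows the same route as the paper: take the full \uos{} $(G_0,a_0)$, use $\alpha=\talpha$ on $G_0$ and its spanning subgraphs, establish strength~$1$ for every edge, and deduce facetness for $\STAB(G_0)$ from Lemma~\ref{lem-prop}\eqref{system} (the paper phrases this last step by contradiction, extending a nontrivial vertex-solution $\tilde y$ to edges via $\tilde y_e:=-\tilde y_{t_e}$ for a chosen weight-$1$ endpoint $t_e$ of $e$, which is exactly your ``eliminate the edge variables'' in contrapositive form). One small slip to tighten: your strength-$1$ argument ``add the weight-$1$ endpoint $v$ to a maximum stable set avoiding it'' only yields a stable set in $G_0-e$ if the \emph{remaining} neighbour of $v$ is also avoided, which is not automatic; the paper handles this cleanly by invoking Lemma~\ref{lem-prop}\eqref{TScontains} to get $\talpha(G_0-e,a_0)=\talpha(G_0,a_0)+1$ and then using $\alpha=\talpha$ on the spanning subgraph $G_0-e$.
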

We remark that there are {\fdg s} which are not facet-graphs, for instance
the last two graphs in Figure~\ref{fig-fdg-basis-2}.
\begin{proof}[Proof of Proposition~\ref{prop-corresp}]
Assume first that a graph $(G',a')$ is a {\invos} of a $1$-critical facet-graph 
$(G, a)$. Then $(G,a)$ is a {\fdg} (Theorem~\ref{th-sam}), and so
is  $(G',a')$ (Lemma~\ref{lem-prop}\eqref{invOS}).
Moreover, $(G, a)$ has equal subdefect and defect (Lemma~\ref{lem-alpha-beta}).
Also, $(G',a')$ and $(G,a)$ have same subdefect (Lemma~\ref{lem-prop}\eqref{invOS}).
Hence, the subdefect of $(G',a')$ equals the defect of $(G,a)$.

Assume now that $(G',a')$ is a {\fdg} and let $(G,a)$ be 
the \uos{} of $(G',a')$ obtained by replacing each edge 
with a path of length 3 and giving a weight of 1 to the 
new vertices. Following Lemma~\ref{lem-prop}\eqref{OS}, 
$(G, a)$ is also facet-defining. Moreover, as in the proof of
Lemma~\ref{lem-alpha-beta} (the roles of $(G,a)$ and 
$(G',a')$ are now interchanged), we have $\talpha(G,a) 
= \alpha(G,a)$. Observe, in passing, that the same holds for 
all spanning subgraphs of $(G,a)$. Now consider an edge $e$ 
of $(G,a)$. Then, by Lemma~\ref{lem-prop}\eqref{TScontains}, 
we have $\beta(G-e,a) = \beta(G,a)+1$. On the other hand,
we also have $\alpha(G-e,a) = \beta(G-e,a)$ as $G-e$ is a 
spanning subgraph of $(G,a)$.
Hence the strength of every edge of $(G,a)$ equals $1$. 
We now show that $(G,a)$ is also a facet-graph.

Arguing by contradiction, assume that $(G,a)$ is not a facet-graph. 
This means that $(G,a)$ does not contain $|G|$ linearly independent
maximum weight stable sets (since the stable set polytope is full-dimensional). It follows then that the system 
$$
\left\{ \sum_{v\in S} y_{v} = \alpha(G,a) \mid S\subseteq V(G),
\ S \textrm{ maximum weight stable set} \right\}
$$
has a solution $\tilde y$ distinct from the solution $y_{v}=a(v)$ for all $v\in V(G)$. 

For each edge $e$ of $G$, pick a vertex $t_{e}$ of weight $1$ 
incident to $e$. Extend now $\tilde y$ to a vector in 
$\mathbb{R}^{V(G) \cup E(G)}$ by letting $\tilde y_{e} = 
-\tilde y_{t_{e}}$ for every edge $e$. Consider any maximum 
worth set $T$ of $(G, a)$ and let $S := T \setminus \{t_{e_{1}}, 
t_{e_{2}}, \dots, t_{e_{k}}\}$, where $E(T) = \{e_{1}, e_{2}, 
\dots, e_{k}\}$. Since $S$ is a maximum weight stable set, we 
obtain
$$
\sum_{v\in T} \tilde y_{v}  + \sum_{e\in E(T)} \tilde y_{e}
= \sum_{v\in T} \tilde y_{v}  - \sum_{e\in E(T)} \tilde y_{t_{e}} 
= \sum_{v\in S} \tilde y_{v} = \alpha(G,a) = \talpha(G,a).
$$
Hence, this extended vector $\tilde y$ is a non trivial solution 
of the system defined in Lemma~\ref{lem-prop}\eqref{system}, 
contradicting the fact that $(G,a)$ is facet-defining. Therefore, 
$(G,a)$ is a $1$-\cfg{}. This concludes the proof.
\end{proof}

Several structural properties of {\fdg s} derive from 
Proposition~\ref{prop-corresp} combined with known results 
on {\cfg s}, as we know illustrate. We first note a direct 
corollary of Theorem~\ref{th-cfg-bound}:

\begin{cor}
\label{cor-fdg-bound}
If $(G,a)$ is a {\fdg} with subdefect $\lambda$, 
then $\deg(v) \le a(v) + \lambda \le 2\lambda$ for every $v\in V(G)$,
and $\deg(v)\le 2\lambda - 1$ when $\lambda > 1$.
\end{cor}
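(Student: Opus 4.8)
The plan is to derive Corollary~\ref{cor-fdg-bound} directly from Proposition~\ref{prop-corresp} and Theorem~\ref{th-cfg-bound}, together with the fact that degrees and weights are essentially invariant under shrinking. First I would invoke Proposition~\ref{prop-corresp}: since $(G,a)$ is a {\fdg}, it is a {\invos} of some $1$-\cfg{} $(H,b)$, and the subdefect $\lambda$ of $(G,a)$ equals the defect $\delta$ of $(H,b)$. Now I would want to transfer the inequalities $\deg_H(v) \le b(v) + \delta \le 2\delta$ (valid for every $v \in V(H)$ by Theorem~\ref{th-cfg-bound}) to the vertices of $G$.

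The key observation is that the shrinking operation only affects the new (weight-$1$) vertices introduced along subdivided paths, and it does so in a controlled way. Every vertex $v$ of $G$ is also a vertex of $H$: shrinking contracts odd paths back to edges, so the original vertices of $G$ survive, and their neighbourhoods change only in that a path of length $2k+1$ from $v$ gets replaced by an edge at $v$. In particular $\deg_G(v) = \deg_H(v)$ for every $v \in V(G)$ (each edge incident to $v$ in $G$ corresponds to exactly one path leaving $v$ in $H$, i.e. exactly one edge incident to $v$ in $H$), and the weight is unchanged: $a(v) = b(v)$ for $v \in V(G)$, since shrinking never alters weights of surviving vertices. Hence the bounds $\deg_H(v) \le b(v) + \delta \le 2\delta$ and $\deg_H(v) \le 2\delta - 1$ (when $\delta > 1$) translate verbatim into $\deg_G(v) \le a(v) + \lambda \le 2\lambda$ and $\deg_G(v) \le 2\lambda - 1$ (when $\lambda > 1$), using $\delta = \lambda$.

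I expect the only genuine subtlety — hardly an obstacle — to be making precise the claim that shrinking preserves degrees and weights at the surviving vertices; this amounts to unwinding the definition of {\uos} (replace edges by odd-length paths, new vertices get weight $1$) and noting that the inverse operation therefore touches neither $\deg_G(v)$ nor $a(v)$ for $v \in V(G)$. One might also prefer the slightly more direct route: apply Theorem~\ref{th-cfg-bound} to the $1$-\cfg{} $(H,b)$ after first replacing every edge of $(G,a)$ by a path of length $3$, exactly as in the proof of Proposition~\ref{prop-corresp}; this is the same argument. Either way the corollary follows immediately, and there is essentially no calculation to carry out.
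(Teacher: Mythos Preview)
Your proposal is correct and follows exactly the route the paper intends: combine Proposition~\ref{prop-corresp} (the \fdg{} is a \invos{} of a $1$-\cfg{} with defect equal to the subdefect) with Theorem~\ref{th-cfg-bound}, noting that passing to a \uos{} preserves both degrees and weights at the original vertices. The paper itself gives no further argument, simply calling it a direct corollary.
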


One of the main interests of Proposition~\ref{prop-corresp} is that
the finite basis theorem for {\cfg s} (Theorem~\ref{th-cfg-basis}) 
extends naturally to {\fdg s}.

\begin{cor}
\label{cor-fdg-basis}
For every integer $\lambda\geq 1$, there exists a finite collection of 
facet-defining graphs such that every facet-defining graph $(G,a)$ with 
subdefect $\lambda$ is a unit odd subdivision of a graph in the collection.
\end{cor}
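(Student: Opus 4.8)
The plan is to derive Corollary~\ref{cor-fdg-basis} directly from Proposition~\ref{prop-corresp} together with the finite basis theorem for \cfg{}s (Theorem~\ref{th-cfg-basis}). First I would fix an integer $\lambda \geq 1$ and apply Theorem~\ref{th-cfg-basis} with $\delta = \lambda$ to obtain a finite collection $\mathcal{C}$ of \cfg{}s such that every \cfg{} with defect $\lambda$ is an {\os} of a graph in $\mathcal{C}$. The subtlety is that $\mathcal{C}$ may contain \cfg{}s that are not $1$-critical, so I cannot use it as is; instead I would let $\mathcal{C}_1 \subseteq \mathcal{C}$ be the subcollection of those graphs in $\mathcal{C}$ that happen to be $1$-\cfg{}s. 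This is still a finite collection, and every $1$-\cfg{} with defect $\lambda$ is an {\os} --- equivalently, by Lemma~\ref{lem-cfg-os}, a \uos{} --- of a graph in $\mathcal{C}_1$: indeed, if $(G,a)$ is a $1$-\cfg{} with defect $\lambda$, it is an {\os} of some $(H,b)\in\mathcal{C}$, and by Lemma~\ref{lem-cfg-os} the edges of $(H,b)$ have the same strengths as the corresponding paths of $(G,a)$, all equal to $1$, so $(H,b)$ is itself $1$-critical and thus lies in $\mathcal{C}_1$.

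Next I would transfer this to \fdg{}s using Proposition~\ref{prop-corresp}. Let $(G',a')$ be any \fdg{} with subdefect $\lambda$. By Proposition~\ref{prop-corresp}, $(G',a')$ is a {\invos} of some $1$-\cfg{} $(G,a)$, and moreover the defect of $(G,a)$ equals the subdefect of $(G',a')$, namely $\lambda$. By the previous paragraph, $(G,a)$ is a \uos{} of some graph $(H,b) \in \mathcal{C}_1$. Since $(G',a')$ is a {\invos} of $(G,a)$ --- that is, $(G,a)$ is a \uos{} of $(G',a')$ --- and $(G,a)$ is in turn a \uos{} of $(H,b)$, I need that the composition of these two subdivision relations is again a \uos{} relation. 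This is immediate from the definition: subdividing edges with odd-length paths (giving new vertices weight $1$) is transitive, since an odd-length path subdivided further into odd-length segments remains an odd-length path. Hence $(G',a')$ is a \uos{} of $(H,b)$, and $\mathcal{C}_1$ is the desired finite collection.

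One gap I should close to make the statement of the corollary literally correct: the collection $\mathcal{C}_1$ as constructed consists of $1$-\cfg{}s, and I should note these are themselves \fdg{}s (by the discussion preceding Proposition~\ref{prop-corresp}, or by Theorem~\ref{th-sam} combined with Lemma~\ref{lem-alpha-beta}), so $\mathcal{C}_1$ is indeed a finite collection \emph{of facet-defining graphs} as the corollary asserts. I would also remark that each member of $\mathcal{C}_1$ has subdefect $\lambda$ (by Lemma~\ref{lem-alpha-beta}, its subdefect equals its defect, which is $\lambda$), so the basis is "clean" in the same sense as the \cfg{} basis.

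I do not anticipate a genuine obstacle here: the whole argument is a bookkeeping exercise combining three already-established facts (Theorem~\ref{th-cfg-basis}, Proposition~\ref{prop-corresp}, Lemma~\ref{lem-cfg-os}). The one place requiring a moment's care is the passage from the full basis $\mathcal{C}$ to the $1$-critical subcollection $\mathcal{C}_1$ --- one must verify that a minimal graph in the $\preceq$-order from which a $1$-\cfg{} is obtained is itself $1$-critical, which is exactly what Lemma~\ref{lem-cfg-os} gives. Everything else is routine transitivity of the subdivision operation.
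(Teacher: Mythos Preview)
Your argument has a genuine gap at the ``transitivity'' step. You have established that $(G,a)$ is a \uos{} of $(G',a')$ and that $(G,a)$ is a \uos{} of $(H,b)$. In the partial order $\preceq$ this reads $(G',a') \preceq (G,a)$ and $(H,b) \preceq (G,a)$: both $(G',a')$ and $(H,b)$ lie \emph{below} the common element $(G,a)$. From this you conclude $(H,b) \preceq (G',a')$, but that is not transitivity at all---it asserts comparability of two elements below a common upper bound, which fails in general. Your justification (``an odd-length path subdivided further into odd-length segments remains odd'') proves the genuine transitivity $A \preceq B,\ B \preceq C \Rightarrow A \preceq C$, which is not what you need here.

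In fact the proposed basis $\mathcal{C}_1$ cannot work. By Lemma~\ref{lem-prop}\eqref{invOS}, any \invos{} of a \fdg{} is again a \fdg{}, so a \fdg{} that has no remote edge (no edge with both ends of degree~2 and weight~1) is $\preceq$-minimal among all weighted graphs and hence is a \uos{} only of itself. The paper notes that there exist such minimal \fdg{}s that are \emph{not} facet-graphs (the last two graphs in Figure~\ref{fig-fdg-basis-2}, for subdefect~2). These graphs are \fdg{}s of subdefect~2 that are not \uos{}s of any element of $\mathcal{C}_1$, since every element of $\mathcal{C}_1$ is a facet-graph and the only graph they are a \uos{} of is themselves.

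The paper's proof handles this differently: it takes as basis the set $\mathcal{B}_\lambda$ of \fdg{}s with subdefect $\lambda$ having no remote edge, and proves two separate things. First, every \fdg{} with subdefect $\lambda$ shrinks to something in $\mathcal{B}_\lambda$; this requires the extra Lemma~\ref{lem-cutset} (a cutset cannot induce $K_2$) to guarantee that a remote edge can actually be contracted. Second, $\mathcal{B}_\lambda$ is finite; here Proposition~\ref{prop-corresp} and Theorem~\ref{th-cfg-basis} are used only to bound the number of vertices of degree $\ge 3$, together with Corollary~\ref{cor-fdg-bound} to bound the degree-2 vertices. So the connection to \cfg{}s is used for finiteness, not to manufacture the basis itself.
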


Before turning to the proof of Corollary~\ref{cor-fdg-basis}, 
we need the following result:

\begin{lemma}
\label{lem-cutset}
In a {\fdg} a cutset cannot induce $K_{2}$. 
\end{lemma}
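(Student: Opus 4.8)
The plan is to argue by contradiction using the structural characterizations from Lemma~\ref{lem-prop}. Suppose $(G,a)$ is a {\fdg} and that some cutset $C$ of $G$ induces a $K_2$, say $C = \{u,v\}$ with $uv \in E(G)$. Removing $C$ disconnects $G$ into (at least) two parts; write $V(G) = A \cup C \cup B$ where $A,B$ are nonempty with no edges between them. The idea is to combine maximum worth sets living in the two sides in a way that contradicts the uniqueness statement in Lemma~\ref{lem-prop}\eqref{system}, or else to produce a maximum worth set violating some degree or intersection property.

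First I would exploit Lemma~\ref{lem-prop}\eqref{TScontains} applied to the edge $uv$: there exist maximum worth sets $T_{00}, T_{10}, T_{01}, T_{11}$ with $T_{ij} \cap \{u,v\}$ equal to $\emptyset, \{u\}, \{v\}, \{u,v\}$ respectively. The key observation should be that, because $C$ is a cutset inducing $K_2$, the ``$A$-part'' and the ``$B$-part'' of a maximum worth set can be combined independently once we fix the intersection with $C$: given a maximum worth set $T$, the sets $T \cap (A \cup C)$ and $T \cap (B \cup C)$ each have a well-defined worth (computed in the respective induced subgraphs), and since there are no edges between $A$ and $B$, the worth of $T$ is the sum of these two worths minus a correction term depending only on $T \cap C$ (namely we double-count $a(T\cap C)$ and $\|T \cap C\|$). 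This additivity means optimal choices on the two sides are independent, so the maximum worth sets are exactly those of the form $(\text{optimal } A\text{-side with a fixed } X) \cup (\text{optimal } B\text{-side with the same } X)$ for $X \subseteq C$, over all $X$ that are ``jointly optimal''.

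The main step then is: using this product structure, I would build a nontrivial solution $y$ to the system in Lemma~\ref{lem-prop}\eqref{system}. The natural candidate is to perturb the weights on the $A$-side versus the $B$-side by opposite amounts — concretely, pick a small $\epsilon$ and set $y_w = a(w) + \epsilon$ for $w \in A$, $y_w = a(w) - \epsilon$ for $w \in B$, leave $y_u, y_v, y_e$ at their trivial values (adjusting the $y_e$ for edges inside $A$ or inside $B$ by $\mp\epsilon$ per endpoint, i.e.\ $y_e = -1 \mp 2\epsilon$ appropriately), and choose the constants so that every maximum worth set $T$ still satisfies $\sum_{v\in T} y_v + \sum_{e \in E(T)} y_e = \talpha(G,a)$. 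For this to work I need that the quantity $|T \cap A| - \|A\text{-edges in }T\|$ shifted against the $B$-side is the \emph{same} for all maximum worth sets $T$ — which is where the cutset-inducing-$K_2$ hypothesis, together with connectivity of $G$ (so both $A$ and $B$ genuinely interact with $C$), must be used to force a contradiction. If instead that quantity varies, then some maximum worth set fails to separate cleanly and I expect to derive a contradiction with Lemma~\ref{lem-prop}\eqref{TScontains} or \eqref{deg} directly.

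The hard part will be making the additivity/product-structure argument precise and verifying that the perturbed vector genuinely lies in the solution space for \emph{every} maximum worth set, not just the product-form ones; in particular I must rule out maximum worth sets whose restriction to one side is not itself worth-maximal on that side (which additivity should preclude) and handle the bookkeeping for edges with both endpoints in $C$. Once the perturbed $y$ is shown to be a solution and is visibly nontrivial (since $\epsilon \neq 0$ and $A,B \neq \emptyset$), it contradicts Lemma~\ref{lem-prop}\eqref{system}, completing the proof. An alternative, possibly cleaner route worth keeping in reserve: transport the statement through Proposition~\ref{prop-corresp} to a $1$-\cfg{} and invoke the analogous known fact that in a \cfg{} a cutset cannot induce $K_2$ (this is the kind of statement that appears in \cite{LL01}); but I would prefer the direct argument above since it keeps everything in the language of worth and the system of Lemma~\ref{lem-prop}.
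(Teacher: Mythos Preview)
Your overall strategy matches the paper's: assume a cutset $\{u,v\}$ with $uv\in E(G)$ separates $G$ into two sides $V_1,V_2$, use Lemma~\ref{lem-prop}\eqref{TScontains} to get maximum worth sets realizing each of the four intersections $X\subseteq\{u,v\}$, observe that the worth decomposes additively across the cut (so for each fixed $X$ the contribution from side $i$ is a constant $c_X^i$ independent of the particular $T$), and then manufacture a nontrivial solution to the system in Lemma~\ref{lem-prop}\eqref{system}.

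The genuine gap is in the perturbation you actually propose. An additive shift $y_w=a(w)\pm\epsilon$ on vertices, with matching additive shifts on edges, does not work: what stays constant over all maximum worth sets $T$ with a given $X=T\cap\{u,v\}$ is the \emph{weighted} quantity $a(T\cap V_i)-||T\cap V_i||$, not $|T\cap V_i|$ or $||T\cap V_i||$ individually. Two maximum worth sets with the same $X$ can therefore pick up different additive corrections, so your vector fails some of the equations before you ever get to compare the four cases. Leaving $y_u,y_v,y_{uv}$ at their trivial values then removes the only degrees of freedom that could have absorbed the discrepancies between the four values of $X$. The paper repairs both points by scaling \emph{multiplicatively}: set $y_w=\gamma^i\,a(w)$ and $y_e=-\gamma^i$ on side $i$ (with $\gamma^1\neq 1$ arbitrary), so that the side-$i$ contribution of any $T$ with intersection $X$ is exactly $\gamma^i c_X^i$; then determine $\gamma^2$ from the $X=\varnothing$ equation and set $y_u,y_v,y_{uv}$ explicitly so that the three remaining equations hold. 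Your fallback (``if the quantity varies, contradict Lemma~\ref{lem-prop}\eqref{TScontains} or \eqref{deg}'') is not a real escape: that unweighted quantity does vary in general, and nothing in those items forbids it.
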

\begin{proof}
Let $(G,a)$ be a {\fdg}. 
Arguing by contradiction, assume that $G=G_1 \cup G_2$ with 
$V(G_1)\cap V(G_2)=\{v,w\}$ and $vw\in E(G)$. Let $\talpha 
:= \talpha(G,a) $ and $V_i:=V(G_i)$, $E_i:=E(G_i)$ for $i=1,2$.

The {\mws s} of $(G,a)$ can be classified in 4 categories, 
according to their intersection with $\{v,w\}$ (which can be 
$\varnothing, \{v\}, \{w\}$ or $\{v,w\}$). It follows from 
Lemma~\ref{lem-prop}\eqref{TScontains} that $(G,a)$ has at 
least one {\mws} in each category. For $X \subseteq \{u,v\}$ 
and $i=1,2$, we define $c_{X}^{i}$ as
$$
c_{X}^{i}:= \big(a(T \cap V_{i})  - ||T \cap V_{i}||\big) - \big(a(X) - ||X||\big), 
$$
where $T$ is any {\mws} of $(G,a)$ with $T \cap \{u,w\} = X$. Notice that,
since $\{v,w\}$ is a cutset of $G$, the value of $c_{X}^{i}$ is independent of the particular
choice of $T$.

Pick any $\gamma^{1} \in \R$ distinct from $1$ and let, 
using the fact that $c_{\varnothing}^{2}\neq 0$,  
$$
\gamma^{2} := \frac{ \talpha - \gamma^{1}c_{\varnothing}^{1} }{ c_{\varnothing}^{2} }.
$$
Define a vector $y\in \R^{V(G) \cup E(G)}$ as follows:
\begin{align*}
y_u &:=  \gamma^{i} \cdot a(u)  & \textrm{ for } i=1,2 \textrm{ and } u\in V_i \setminus \{v,w\}; \\
y_e &:= \gamma^{i} \cdot (-1)  & \textrm{ for } i=1,2 \textrm{ and } e\in E_i \setminus \{vw\}; \\
y_u &:= \talpha  - \gamma^{1}  c_{\{u\}}^{1} - \gamma^{2}  c_{\{u\}}^{2} 
& \textrm{ for }  u \in \{v,w\}; \\
y_{vw} &:= \talpha  - \gamma^{1}  c_{\{v,w\}}^{1} -  \gamma^{2}  c_{\{v,w\}}^{2} - y_v - y_w. 
\end{align*}
This vector $y$ is a non trivial solution to the system of Lemma~\ref{lem-prop}\eqref{system}, a contradiction.
\end{proof}

\begin{proof}[Proof of Corollary~\ref{cor-fdg-basis}]
In virtue of Lemma~\ref{lem-prop}\eqref{deg}, every vertex of 
a {\fdg} has degree at least 2. This is in particular true for 
$1$-\cfg{}s. Now consider some (sub)defect $\lambda \geq 1$. 
By Theorem~\ref{th-cfg-basis}, the number of vertices with 
degree at least 3 in a $1$-\cfg{} with defect $\lambda$ is 
bounded from above by some constant $c_{\lambda}$ that depends 
only on $\lambda$. 

We call an edge {\em remote} if both of its ends have
degree 2. Denote by $\mathcal{B}_{\lambda}$ the set of 
\fdg{}s with subdefect $\lambda$ having no remote edge. 
Every facet-defining graph $(G,a)$ with subdefect 
$\lambda$ is a unit odd subdivision of some graph in 
$\mathcal{B}_{\lambda}$, as easily proved by induction 
on $|G|$: either $(G,a) \in \mathcal{B}_{\lambda}$ or 
$(G,a)$ has a remote edge $uv$. In the latter case, we 
find an induced path $u'uvv'$ in $G$, as otherwise there 
would be a cutset inducing $K_{2}$, which Lemma~\ref{lem-cutset}
forbids. Now, by `shrinking' this path (i.e. removing $u,v$ 
and adding the edge $u'v'$) and using Lemma~\ref{lem-prop}%
\eqref{invOS}, we are done by induction. Hence, 
$\mathcal{B}_{\lambda}$ is a basis for {\fdg s} with 
subdefect $\lambda$. 
 
We know from Proposition~\ref{prop-corresp} that any graph 
$(G,a) \in \mathcal{B}_{\lambda}$ is a shrinking of a 
$1$-critical facet-graph, and thus that the number of 
vertices with degree at least 3 in $(G,a)$ is bounded by 
$c_{\lambda}$. Since $(G,a)$ has no remote edge, we 
deduce $|G| \le c_{\lambda} + 2\lambda {c_{\lambda} \choose 2}$
(cf. Corollary~\ref{cor-fdg-bound}), 
and that $\mathcal{B}_{\lambda}$ is finite.
\end{proof}

Similarly as for {\cfg s}, Corollary~\ref{cor-fdg-bound} implies that
{\fdg s} with subdefect 1 are the odd cycles with unit weights.
We note that Theorem~\ref{th-cfg-basis-def2} shows in particular that every
{\cfg} with defect 2 is $1$-critical. Hence, we obtain the following corollary.

\begin{cor}
\label{cor-fdg-basis-2}
Every {\fdg} with subdefect $2$ is a {\uos} of a graph depicted in Figure~\ref{fig-fdg-basis-2}.
\end{cor}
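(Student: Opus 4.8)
The plan is to deduce Corollary~\ref{cor-fdg-basis-2} from Proposition~\ref{prop-corresp} together with Sewell's basis theorem (Theorem~\ref{th-cfg-basis-def2}). First I would invoke Proposition~\ref{prop-corresp}: a weighted graph $(G,a)$ is a {\fdg} with subdefect $2$ if and only if it is a {\invos} of a $1$-\cfg{} with defect $2$. So it suffices to understand the $1$-\cfg{}s with defect $2$, and then to translate the statement ``every such graph is an {\os} (equivalently, by Lemma~\ref{lem-cfg-os}, a {\uos}) of a basis graph'' into the language of unit odd subdivisions.

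The key observation is the remark already made in the text just before the corollary: Theorem~\ref{th-cfg-basis-def2} exhibits a basis for \emph{all} {\cfg s} with defect $2$, and one checks (using Lemma~\ref{lem-cfg-os}, which says {\os s} preserve edge strengths) that every graph obtainable as an {\os} of a basis graph in Figure~\ref{fig-cfg-basis-def2} has all edge strengths equal to $1$, because each basis graph itself does. Hence \emph{every} {\cfg} with defect $2$ is in fact a $1$-\cfg{}. Therefore the class of $1$-\cfg{}s with defect $2$ coincides with the class of all {\cfg s} with defect $2$, and Theorem~\ref{th-cfg-basis-def2} directly gives that each of them is an {\os} of a graph in Figure~\ref{fig-cfg-basis-def2}.

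Next I would assemble the pieces. Let $(G,a)$ be a {\fdg} with subdefect $2$. By Proposition~\ref{prop-corresp} it is a {\invos} of some $1$-\cfg{} $(H,b)$ whose defect equals $2$. By the previous paragraph and Theorem~\ref{th-cfg-basis-def2}, $(H,b)$ is an {\os} of some basis graph $(H_0,b_0)$ from Figure~\ref{fig-cfg-basis-def2}. Since $(H_0,b_0)$ is itself a $1$-\cfg{} (defect $2$), all edges have strength $1$, so by Lemma~\ref{lem-cfg-os} the {\os} $(H,b)$ is in fact a {\uos} of $(H_0,b_0)$. Composing, $(G,a)$ is a {\uos} of $(H_0,b_0)$ if $(H,b)$ being a {\uos} of $(H_0,b_0)$ and $(G,a)$ being a {\invos} of $(H,b)$ can be combined — but a {\invos} of a {\uos} need not be a {\uos}. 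To handle this cleanly I would instead argue directly on $(G,a)$: being a {\invos} of $(H,b)$ means $(H,b)$ is a {\uos} of $(G,a)$, and $(H,b)$ is a {\uos} of $(H_0,b_0)$; both relations subdivide edges into odd paths with unit-weight internal vertices, but $(G,a)$ need not have come from $(H_0,b_0)$ this way. The correct route is the one used in the proof of Corollary~\ref{cor-fdg-basis}: shrink $(G,a)$ along induced paths $u'uvv'$ through remote edges (legitimate by Lemma~\ref{lem-cutset} and Lemma~\ref{lem-prop}\eqref{invOS}) until no remote edge remains, obtaining a graph in the basis $\mathcal B_2$; I then just need to show $\mathcal B_2$ consists exactly of the graphs of Figure~\ref{fig-fdg-basis-2}.

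The main obstacle, then, is the finite check that $\mathcal B_2$ — the {\fdg s} with subdefect $2$ and no remote edge — is precisely the explicit list in Figure~\ref{fig-fdg-basis-2}. Each such graph, by Proposition~\ref{prop-corresp}, is a shrinking of a $1$-\cfg{} with defect $2$, which by the above is an {\os} of a Figure~\ref{fig-cfg-basis-def2} graph; since a shrinking of a {\uos} of $(H_0,b_0)$ with no remote edge must itself be obtained from $(H_0,b_0)$ by only ``trivial'' subdivisions at vertices of degree $\ge 3$, and Corollary~\ref{cor-fdg-bound} bounds the degrees and hence the number of vertices, there are only finitely many candidates; running through them (possibly merging or re-subdividing edges to reach the no-remote-edge form) yields exactly Figure~\ref{fig-fdg-basis-2}. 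This enumeration is routine but is where all the actual work lies; I expect the figures were produced precisely by carrying it out for the small number of basis graphs in Figure~\ref{fig-cfg-basis-def2}.
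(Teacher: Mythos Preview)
Your proposal is correct and follows essentially the same route as the paper. The paper's argument is extremely terse—just the sentence ``Theorem~\ref{th-cfg-basis-def2} shows in particular that every \cfg{} with defect $2$ is $1$-critical. Hence, we obtain the following corollary''—so what you wrote is really an unpacking of that sentence together with the machinery already set up in the proof of Corollary~\ref{cor-fdg-basis}. You correctly note the subtlety that ``$(H,b)$ is a \uos{} of $(G,a)$'' and ``$(H,b)$ is a \uos{} of $(H_0,b_0)$'' do not compose to give a \uos{} relation between $(G,a)$ and $(H_0,b_0)$, and you resolve it the right way: pass to the no-remote-edge basis $\mathcal B_2$ and observe that the finite enumeration producing Figure~\ref{fig-fdg-basis-2} from Figure~\ref{fig-cfg-basis-def2} is what remains. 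The paper leaves that enumeration implicit in the figure, exactly as you suspected.

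One small wording issue: the phrase ``must itself be obtained from $(H_0,b_0)$ by only `trivial' subdivisions at vertices of degree $\ge 3$'' is not accurate as stated (a graph in $\mathcal B_2$ can be strictly smaller than the corresponding $(H_0,b_0)$, namely when $(H_0,b_0)$ itself has shrinkable unit-weight paths). What you actually need—and what you use—is only that $|G_0|$ is bounded (via Corollary~\ref{cor-fdg-bound} and the bound on degree-$\ge 3$ vertices inherited from the \cfg{} side), so the enumeration is finite. Tightening that sentence would make the write-up clean.
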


\begin{figure}
\centering
\includegraphics[width=0.6\textwidth]{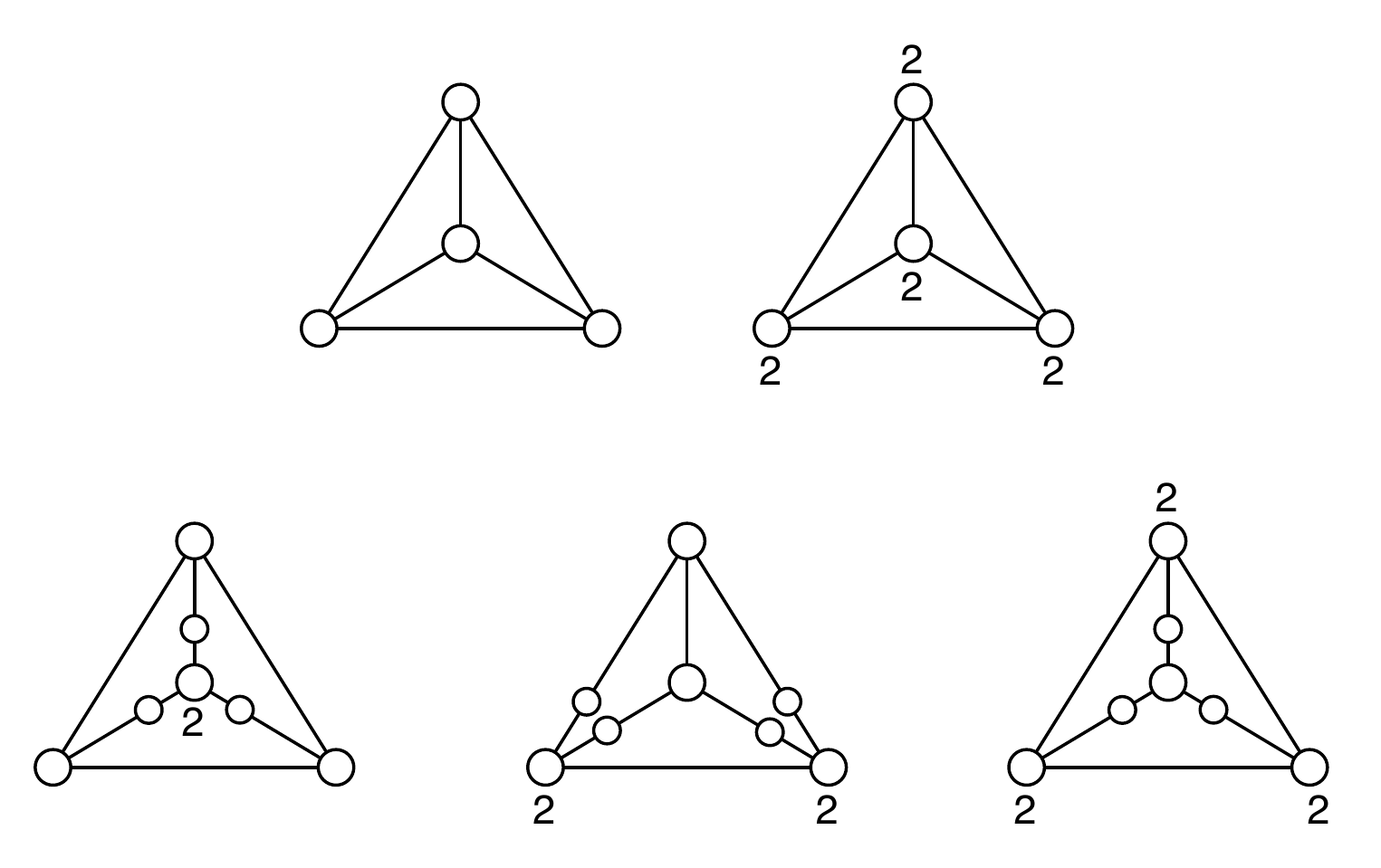}
\caption{\label{fig-fdg-basis-2}A basis for {\fdg s} with subdefect 2 (only weights
different from 1 are indicated).}
\end{figure}

\section{Finite basis for $\alpha$-critical graphs}
\label{sec-basis} 

As we have seen, the finite basis result for {\fdg s} (Corollary~\ref{cor-fdg-basis})
is a consequence of the corresponding theorem for {\cfg s}, Theorem~\ref{th-cfg-basis},
which was proved by Lipt\'ak and Lov\'asz~\cite{LL00}. The main step of their proof 
is a lemma which says roughly that every {\cfg} is the image of an {\ac} graph with 
the same defect under a particular well-behaved homomorphism. The result is then derived 
from Lov\'asz's finite basis theorem for $\alpha$-critical graphs (Theorem~\ref{th-acg-basis}).
Hence, the latter theorem is not only important  for {\ac} graphs, it is also a key result
for {\cfg s} and {\fdg s}. The purpose of this section is to present an alternative proof
for this theorem, restated as follows:

\begin{theorem}[Lov\'asz~\cite{L78}]
\label{th-L78-deg3}
For every $\delta \ge 1$, there exists a constant $c_{\delta}$ such that
every connected {\ac} graph with defect $\delta$ has at most $c_{\delta}$
vertices with degree at least 3.
\end{theorem}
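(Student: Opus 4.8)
The plan is to reduce Theorem~\ref{th-L78-deg3} to the finite basis theorem for $1$-\cfg{}s via the connection established in Proposition~\ref{prop-corresp}, rather than reproving it from scratch. The key observation is that an {\ac} graph $G$, equipped with the all-one weighting $\one$, is a $1$-\cfg{} precisely when it is connected, has at least three vertices, and is distinct from $K_2$: indeed, for such $G$ the strength of every edge is exactly $1$ (removing an edge from an {\ac} graph increases $\alpha$ by exactly $1$, since one may add back at most one endpoint), and Chv\'atal's theorem~\cite{C75} guarantees that the rank inequality $\sum_{v\in V}x_v \le \alpha(G)$ is facet-defining, so $(G,\one)$ is a facet-graph. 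Conversely, a $1$-\cfg{} of the form $(G,\one)$ is easily seen to be a connected {\ac} graph. Thus the class of connected {\ac} graphs on at least three vertices (other than $K_2$) sits inside the class of $1$-\cfg{}s as exactly the unit-weighted members, and the defect of $(G,\one)$ coincides with the graph-theoretic defect $\delta = |G| - 2\alpha(G)$.

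First I would make this identification precise and observe that the {\os} operation on {\ac} graphs (replacing edges by odd paths) corresponds, under $(G,\one)\mapsto(G',\one')$, to the {\uos} operation: since all weights are $1$, the strength of each edge is $1$, so each new vertex in an elementary \os{} receives weight $1$, and by Lemma~\ref{lem-cfg-os} the result is again a $1$-\cfg{} of the same defect. Hence $(G',\one')$ is again unit-weighted, i.e., corresponds to an {\ac} graph, which must then be an {\os} of $G$ in the classical sense. Second, I would invoke Theorem~\ref{th-cfg-basis} to obtain, for each fixed $\delta\ge 1$, a finite collection $\mathcal{C}_\delta$ of {\cfg s} with defect $\delta$ such that every {\cfg} with defect $\delta$—in particular every $(G,\one)$ with $G$ connected {\ac} of defect $\delta$—is an {\os} of some member of $\mathcal{C}_\delta$.

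The main obstacle is that the basis elements in $\mathcal{C}_\delta$ need not themselves be unit-weighted, so I cannot directly conclude a finite basis theorem for {\ac} graphs in the strict sense of Theorem~\ref{th-acg-basis}; however, for Theorem~\ref{th-L78-deg3} this does not matter, because I only need the \emph{degree} bound. The argument runs exactly as in the proof of Corollary~\ref{cor-fdg-basis}: call an edge \emph{remote} if both endpoints have degree $2$. Since $G$ is {\ac}, $\delta(G)\ge 1$ so $G$ has no vertex of degree~$1$ that would obstruct shrinking (more carefully, an {\ac} graph on at least three vertices with a cutset inducing $K_2$ can be excluded, since $\alpha$ would be additive across such a cut and one could re-add an endpoint after deleting the cut edge, contradicting criticality—this is the unit-weight specialization of Lemma~\ref{lem-cutset}). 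Therefore, iteratively shrinking remote edges (each shrink preserving the {\ac} property and the defect, via the classical fact that {\os s} and their inverses preserve both), every connected {\ac} graph with defect $\delta$ reduces to one with \emph{no} remote edge, which is an {\os}-image inflation of it. Now by Theorem~\ref{th-cfg-basis} the number of vertices of degree at least $3$ in any {\cfg} with defect $\delta$—and hence in any remote-edge-free {\ac} graph with defect $\delta$—is bounded by a constant $c_\delta$ depending only on $\delta$; since inflating remote edges into odd paths does not create new vertices of degree $\ge 3$, the same bound $c_\delta$ holds for the original graph $G$. This yields Theorem~\ref{th-L78-deg3}. The one delicate point to get right in writing the details is the claim that the strength of every edge of an {\ac} graph equals $1$ and that shrinking is available whenever a remote edge exists; both follow from elementary arguments about $\alpha$ under edge deletion together with the $K_2$-cutset exclusion.
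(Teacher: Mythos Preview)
Your proposal is circular. You derive Theorem~\ref{th-L78-deg3} from Theorem~\ref{th-cfg-basis}, but as the paper states explicitly at the start of Section~\ref{sec-basis}, Lipt\'ak and Lov\'asz's proof of Theorem~\ref{th-cfg-basis} proceeds by reducing every \cfg{} to an \ac{} graph with the same defect and then invoking Lov\'asz's finite basis theorem for \ac{} graphs (Theorem~\ref{th-acg-basis}). Theorem~\ref{th-L78-deg3} is precisely the restatement of Theorem~\ref{th-acg-basis} that the paper sets out to prove independently; indeed, the paragraph following the statement shows that Theorem~\ref{th-L78-deg3} implies Theorem~\ref{th-acg-basis}. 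So your chain of implications is
\[
\text{Theorem~\ref{th-L78-deg3}} \Longleftarrow \text{Theorem~\ref{th-cfg-basis}} \Longleftarrow \text{Theorem~\ref{th-acg-basis}} \Longleftarrow \text{Theorem~\ref{th-L78-deg3}},
\]
which proves nothing. The whole point of Section~\ref{sec-basis} is to supply a proof of Theorem~\ref{th-L78-deg3} that does \emph{not} rely on Theorem~\ref{th-acg-basis}, and hence ultimately to put Theorem~\ref{th-cfg-basis} and Corollary~\ref{cor-fdg-basis} on an independent footing.

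Even setting the circularity aside, the detour through shrinking remote edges is unnecessary for your purposes: if Theorem~\ref{th-cfg-basis} were freely available, the bound on the number of vertices of degree at least~$3$ would follow immediately for \emph{every} \cfg{} with defect~$\delta$ (not just remote-edge-free ones), since an \os{} adds only degree-$2$ vertices. By contrast, the paper's actual proof is a direct, self-contained argument: it reduces to maximum degree~$3$ by vertex splitting, builds an auxiliary $2$-arc-colored digraph $D_G$ on the edges incident to degree-$3$ vertices, shows via Lemma~\ref{lem-bound-defect} that any monochromatic admissible acyclic tournament in $D_G$ has order at most~$\delta$, and then uses a Ramsey-type argument (Claims~\ref{claim-DbarD}--\ref{claim-blow-up}) to force such a tournament once there are too many degree-$3$ vertices.
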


This version implies the one given in Theorem~\ref{th-acg-basis}. Indeed,
every connected {\ac} graph which is minimal for the partial order $\preceq$ 
associated to the {\os} operation does not have two adjacent vertices with 
degree 2. Moreover, it is easily seen that, in a graph $G$, if two vertices $v$ and $w$
are not adjacent and have exactly the same neighbors, then any edge $e$ incident to $v$ or
$w$ is such that $\alpha(G-e) = \alpha(G)$. Hence, there are at most $r \choose 2$
vertices with degree 2 in a minimal connected {\ac} graph, where $r$ is
the number of vertices with degree at least 3.

The outline of our proof is as follows. We first relate the defect of an {\ac} 
graph $G$ to the maximum order of an acyclic tournament in a collection of 
directed graphs associated to $G$. We then use this relationship to transform 
the problem into a Ramsey-type problem on digraphs, which in turn follows from 
standard results in Ramsey theory. Let us emphasize that, while this gives a 
shorter and perhaps simpler proof of the existence of $c_{\delta}$, the value 
for $c_{\delta}$ that is implied by our proof is much larger than the one 
proved in~\cite{L78}.

In this section, by a {\mws} of a graph $G$ we mean a {\mws} of $(G,\one)$. 
A main ingredient in our proof of Theorem~\ref{th-L78-deg3} is the following 
simple lemma on sequences of {\mws s}. Interestingly, this lemma was originally 
introduced in a more general form in~\cite[Lemma~16]{DFJ06}, as a tool to study 
the subdefect of {\fdg s}.

\begin{lemma}[Doignon et al.~\cite{DFJ06}]
\label{lem-bound-defect}
Let $G$ be an {\ac} graph with defect $\delta$ and $T_{1},\dots,T_{k}$ be a 
sequence of {\mws s} (repetitions are allowed) such that for every vertex $u$ 
of $G$ there exist indices $i,j \in \{1,\dots, k\}$ with $u \in T_{i}$ and 
$u \notin T_{j}$. Then 
$$
\delta \geq \sum_{i=1}^{k} ||T_{i}|| - \sum_{j=3}^{k} ||X_{j}||,
$$
where $X_{j} :=  
\big( (T_{1} \cup \dots \cup T_{j-1}) \cap T_{j} \big) \cup
\big( (T_{1} \cap \dots \cap T_{j-1}) \setminus T_{j} \big)$.
\end{lemma}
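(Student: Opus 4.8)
\textbf{Proof plan for Lemma~\ref{lem-bound-defect}.}
The plan is to exhibit, for any maximum worth set $T$, a family of $\delta$ ``surplus'' edges inside $T$ that cannot all be charged to the worth of $T$, and then to track how these surpluses accumulate and cancel as we process the sequence $T_1,\dots,T_k$ one set at a time. Concretely, I would first recall that for an {\ac} graph $G$ one has $\beta(G,\one)=\alpha(G)$ (any set can be made stable without decreasing its worth by deleting, one at a time, an endpoint of any edge it induces), so a {\mws} $T$ satisfies $|T|-\|T\| = \alpha(G)$, i.e. $\|T\| = |T|-\alpha(G)$. Summing, $\sum_i \|T_i\| = \sum_i |T_i| - k\alpha(G)$. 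The hypothesis that every vertex lies in some $T_i$ and misses some $T_j$ is exactly what makes the symmetric-difference bookkeeping below nontrivial (a vertex in all $T_i$ or in none would contribute a degenerate term).

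Next I would introduce the running union $U_j := T_1\cup\dots\cup T_j$ and running intersection $I_j := T_1\cap\dots\cap T_j$, and analyse the ``merge'' of $T_j$ into the pair $(U_{j-1},I_{j-1})$. The key local identity is a submodularity-type inequality for the worth function $w(S)=a(S)-\|S\|$ on subsets of a graph: for any two sets $A,B$,
\begin{equation*}
w(A\cup B) + w(A\cap B) \;\ge\; w(A) + w(B) \;-\; \|X\|,
\end{equation*}
where $X := (A\cap B')\cup(A'\cap B)$ is the set of vertices lying in exactly one of $A,B$ that also have a neighbour across — more precisely, the discrepancy comes exactly from edges with one end in $A\setminus B$ and the other in $B\setminus A$, which are counted in neither $\|A\cup B\|$ nor $\|A\cap B\|$; one checks $\|A\cup B\|+\|A\cap B\| = \|A\|+\|B\| + (\text{edges between }A\setminus B\text{ and }B\setminus A)$, and that last quantity is at most $\|X\|$ for the relevant choice of $X$. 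Applying this with $A=U_{j-1}$, $B=T_j$ and telescoping from $j=3$ up to $k$ (the cases $j=1,2$ being absorbed into the base, since $X_j$ is only defined for $j\ge 3$), I would get that $w(U_k)+\sum_{\text{appropriate }j} w(I_j\text{-type terms})$ is at least $\sum_i w(T_i) - \sum_{j=3}^k \|X_j\|$. Since each $w(T_i)=\beta(G,\one)=\alpha(G)$ while $w$ of any set is at most $\beta(G,\one)$, the left-hand side is at most $k\,\alpha(G)$ minus a correction of size at least $\sum_i |T_i| - 2\alpha(G) - \dots$; cleaning this up and using $\delta = |V(G)| - 2\alpha(G)$ together with $U_k = V(G)$ (every vertex is in some $T_i$) yields the stated inequality $\delta \ge \sum_{i=1}^k \|T_i\| - \sum_{j=3}^k \|X_j\|$.

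The step I expect to be the main obstacle is pinning down the exact form of $X_j$ so that the cross-edge count between $U_{j-1}\setminus T_j$ and $T_j\setminus U_{j-1}$ is genuinely bounded by $\|X_j\|$ with the definition $X_j = \big((U_{j-1})\cap T_j\big)\cup\big((I_{j-1})\setminus T_j\big)$ — one must verify that the two ``new'' vertex parts that get merged at step $j$, namely the vertices entering the intersection-side and those entering the union-side, together induce all the problematic cross edges, and that the telescoping does not double-count an edge across two different steps. This requires being careful about which of $U_{j-1}$, $I_{j-1}$ changes at step $j$ and keeping the running quantities $U_j$, $I_j$ in sync; once the single-step inequality is stated correctly the summation is routine. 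It is also worth remarking, as the authors do, that the general version in \cite[Lemma~16]{DFJ06} is proved for arbitrary weighted {\fdg s} via essentially this same submodular bookkeeping, so here I would simply specialize that argument to $(G,\one)$ with $\alpha(G)=\beta(G,\one)$.
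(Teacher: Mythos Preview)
The paper does not prove this lemma; it is quoted from \cite{DFJ06} and used as a black box. So there is no ``paper's own proof'' to compare against, and I can only evaluate your plan on its merits.

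Your plan has a genuine gap at exactly the point you flag as the main obstacle. You propose to apply the two-set identity
\[
\|A\cup B\|+\|A\cap B\|=\|A\|+\|B\|+e(A\setminus B,\,B\setminus A)
\]
with $A=U_{j-1}:=T_1\cup\dots\cup T_{j-1}$ and $B=T_j$, and then bound the cross-edge term $e(U_{j-1}\setminus T_j,\,T_j\setminus U_{j-1})$ by $\|X_j\|$. But this bound is false in general: any vertex $v\in T_j\setminus U_{j-1}$ lies in neither $U_{j-1}\cap T_j$ nor $I_{j-1}\setminus T_j$, so $v\notin X_j$, and hence \emph{no} cross edge is counted in $\|X_j\|$. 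The two-set telescoping on $U_j$ alone simply does not produce the sets $X_j$; the running intersection $I_{j-1}$ appears in the definition of $X_j$ for a reason, and your scheme never uses it.

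The argument that actually works tracks \emph{both} $U_j$ and $I_j$ simultaneously. One checks the pointwise identity of indicator functions
\[
\one_{U_j}+\one_{I_j}+\one_{X_j}=\one_{U_{j-1}}+\one_{I_{j-1}}+\one_{T_j},
\]
which telescopes (using $U_k=V(G)$ and $I_k=\varnothing$) to $|V(G)|+\sum_{j=3}^k|X_j|=\sum_{i=1}^k|T_i|$. Since each $T_i$ is a maximum worth set, $\|T_i\|=|T_i|-\alpha(G)$, whence $\sum_i\|T_i\|=|V(G)|+\sum_{j\ge 3}|X_j|-k\alpha(G)$. Now subtract $\sum_{j\ge 3}\|X_j\|$ and use that $w(X_j)=|X_j|-\|X_j\|\le\talpha(G,\one)=\alpha(G)$ for each of the $k-2$ terms to obtain
\[
\sum_i\|T_i\|-\sum_{j\ge 3}\|X_j\|\le |V(G)|-2\alpha(G)=\delta.
\]
No edge-submodularity and no bound on cross edges is needed; the only inequality is $w(X_j)\le\alpha(G)$. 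Your intuition about telescoping and about $\beta(G,\one)=\alpha(G)$ is correct, but the decomposition must be the three-way one $(U_{j-1},I_{j-1},T_j)\mapsto(U_j,I_j,X_j)$, not the two-way one you wrote down.
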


\begin{proof}[Proof of Theorem~\ref{th-L78-deg3}]
Let $G$ be a connected {\ac} graph with defect $\delta$.
We want to show that the number of vertices with degree at least 3 is
bounded from above by some constant $c_{\delta}$ depending only on $\delta$.
To this aim, we may assume without loss of generality that $G$ has maximum degree exactly 3.
Indeed, nothing has to be proved if $G$ has no vertex with degree at least 3, and
if $v\in V(G)$ has degree at least 4, then we can simultaneously decrease the
number of vertices of degree more than 3 and increase the number of
vertices with degree at least 3 by {\em splitting} $v$: partition the neighbors of $v$
into two sets $N_{1},N_{2}$, each of cardinality at least 2, remove $v$, add three new vertices
$v_{1},v_{2},v'$, and link $v_{i}$ to $v'$ and the vertices of $N_{i}$, for $i=1,2$.
It is easily seen that this operation keeps a graph {\ac} and does not change the defect
(see, e.g., \cite{LP86} for a proof).

Denote by $v_{1}, \dots, v_{p}$ the vertices of $G$ with degree 3. We assume
that no two of them are adjacent, this can always be achieved by taking an appropriate
{\os} of $G$. Denote also by $e_{i,1}, e_{i,2}, e_{i,3}$ the three edges incident
to $v_{i}$, and let $T_{i,j}$ denote any maximum stable set of $G - e_{i,j}$.
Notice that $T_{i,j}$ is a {\mws} of $G$ with $E(T_{i,j}) = \{e_{i,j}\}$.

We define a digraph $D_{G}$ based on $G$ and the $T_{i,j}$'s. Its vertex 
set is the set of edges of $G$ which are incident to some degree-3 vertex, i.e.,
$$
V(D_{G}) = \{e_{i,j} \mid 1 \leq i\leq p \textrm{ and } 1\leq j\leq 3\},
$$
and for every distinct $i,k \in \{1,\dots,p\}$ and $j\in\{1,2,3\}$, we put an arc from $e_{i,j}$ 
to $e_{k,\ell}$ for all $\ell \in \{1,2,3\}$
whenever
$$
\textrm{either} \quad (v_{k} \in T_{i,j+1} \textrm{ and } v_k \in T_{i,j+2})
\quad \textrm{or} \quad (v_{k} \notin T_{i,j+1} \textrm{ and } v_k \notin T_{i,j+2}),
$$
where indices are taken mod 3. Moreover, we color the arc $(e_{i,j},e_{k,\ell})$ 
red in the first case, blue in the second.

An {\at} $J$ in $D_{G}$ is {\em \adm} if $J$ contains at most one of the three 
vertices $e_{i,1}, e_{i,2}, e_{i,3}$, for $1\leq i\leq p$. In addition, $J$ is 
said to be {\em red} (resp.\ {\em blue}) if all its arcs are colored red 
(resp.\ blue). Our main tool is the following observation:

\begin{claim}
\label{claim-connection}
If $J$ is a red or blue {\adm} {\at} in $D_{G}$, then $ |J| \leq \delta$.
\end{claim}
\begin{proof}
By renaming the indices if necessary, we may assume $V(J) = \{e_{i,1} \mid
1 \le i \le t\}$ and $A(J) = \{(e_{i,1},e_{k,1}) \mid 1 \le i < k \le t\}$.
Let $\{w_{1},\dots, w_{\ell}\} := V(G) \setminus \{v_{1}, \dots, v_{t}\}$
and, for $1 \le i \le \ell$, let $S_i$ (resp.\ $S^{'}_{i}$) be any maximum 
stable set of $G$ with $w_{i} \in S_{i}$ (resp.\ $w_{i} \notin S^{'}_{i}$). 
Now consider the following sequence of {\mws s} of $G$:
$$
T_{1,2}, T_{1,3} - v_{1}, T_{2,2}, T_{2,3} - v_{2}, \dots, T_{t,2}, T_{t,3} - v_{t},
S_{1}, S^{'}_{1}, \dots, S_{\ell}, S^{'}_{\ell}.
$$
For the sake of clarity, we will commit a slight abuse of notation and denote 
by $T_{i}$ the $i$-th set of the above sequence of $k := 2t + 2\ell$ sets. 
Notice that, by construction, our sequence of maximum worth sets satisfies 
the assumption of Lemma~\ref{lem-bound-defect}. Also, if $uv \in E(T_{i})$ 
then $u,v \notin T_{i+1}$, for $1 \leq i < k$. Defining $X_{j}$ as in 
Lemma~\ref{lem-bound-defect}, this implies that, for $j \geq 3$, if we 
have $uv \in E(X_{j})$, then we also have $uv \in E(T_{j})$. Hence, 
$E(X_{j}) \subseteq E(T_{j})$.

Using Lemma~\ref{lem-bound-defect}, we obtain: 
$$
\delta \geq ||T_{1}|| + ||T_{2}|| + \sum_{j=3}^{k} (||T_{j}|| - ||X_{j}||)
= 1+  \sum_{j=3}^{k} |E(T_{j}) \setminus E(X_{j})|. 
$$
Each term in the last sum is nonnegative. We now prove that at
least $t - 1$ of them are positive, which clearly implies 
the claim. Pick some $i \in \{2, \dots, t\}$ and denote by $x$ 
the end of the edge $e_{i,2}$ that is distinct from $v_{i}$.
If $J$ is red, then by the definition of $D_{G}$ we have $v_{i} 
\in T_{j}$ for $1 \leq j \leq 2i - 2$. Since $T_{2i-1}$ (which 
equals $T_{i,2}$) is the only set in our sequence of {\mws s} that 
contains both ends of $e_{i,2}$, we deduce $x \notin T_{j}$ for 
$1 \leq j \leq 2i - 2$. This shows $x \notin X_{2i - 1}$, and hence 
$e_{i,2} \in E(T_{2i-1}) \setminus E(X_{2i-1})$. Similarly, 
if $J$ is blue then it follows from the definition of $D_{G}$ that
$v_{i} \notin T_{j}$ for $1 \leq j \leq 2i - 2$, which implies 
$v_{i} \notin X_{2i - 1}$, and again $e_{i,2} \in E(T_{2i-1}) 
\setminus E(X_{2i-1})$. 
\end{proof}

By the above claim, to prove Theorem~\ref{th-L78-deg3} it is 
sufficient to show that if $G$ has many degree-3 vertices, then
there is a large monochromatic {\adm} {\at} in $D_{G}$. As $D_{G}$ 
is ``almost'' a complete digraph, this sounds like a Ramsey-type 
property, and indeed we will reduce it to Ramsey's theorem. To 
this aim, we proceed with three claims. The first one is an easy 
consequence of Ramsey's theorem (see e.g.~\cite{HH74} for a proof).

\begin{claim}
\label{claim-DbarD}
For every $k \ge 1$, there exists a constant $d_{k}$ such that for every digraph $D$
with at least $d_{k}$ vertices, $D$ or its complement $\bar D$ contains an {\at} of order $k$.
\end{claim}

We say that $D'$ is a {\em blow-up} of a digraph $D$ if it can be obtained as follows: 
first create three vertices $v_{1},v_{2},v_{3}$ per vertex $v$ of $D$, then for each 
arc $(v,w)\in A(D)$,
choose some subset $I_{(v,w)}\subseteq \{1,2,3\}, I_{(v,w)} \neq \varnothing$, and add the arcs 
$(v_{i},w_{1}), (v_{i},w_{2}), (v_{i},w_{3})$ for every $i\in I_{(v,w)}$. Similarly as before,
we say that an acyclic tournament in $D'$ is {\em $D$-{\adm}} if for every vertex $v$ of $D$,
it contains at most one of the three corresponding vertices in $D'$.

Let us give some intuition on acyclic tournaments in blow-ups of digraphs.
If $I_{(v,w)} = \{1,2,3\}$ for each arc $(v,w)\in A(D)$ in the definition of the blow-up operation,
then $D'$ is simply the lexicographic product $D \otimes \bar K_{3}$ of $D$ with the
complement of $K_{3}$. In particular, in this case a $D$-{\adm} 
acyclic tournament of order $k$ in $D'$ is readily obtained from an 
acyclic tournament of order $k$ in $D$. The same holds more generally if,
for every $v \in V(D)$, we have
$\cap_{(v,w) \in A(D)} I_{(v,w)} \neq \varnothing$, because
then $D'$ has a subgraph isomorphic to $D$ which contains exactly one the
three vertices $v_{1},v_{2},v_{3}$ for each $v \in V(D)$. 
It turns out that this observation can essentially be extended to the case where
the sets $I_{(v,w)}$ are arbitrary nonempty subsets of  $\{1,2,3\}$: the digraph $D'$
will contain a $D$-{\adm} acyclic tournament of order $k$, provided $D$ contains
a large enough acyclic tournament.
This is a consequence of the following claim.

\begin{claim}
\label{claim-blow-up-at}
For every $k \ge 1$, there exists a constant $a_{k}$ such that for every acyclic tournament $D$
on at least $a_{k}$ vertices, any blow-up $D'$ of $D$ contains a $D$-{\adm} 
acyclic tournament of order $k$.
\end{claim}

\begin{proof}
We prove the claim by induction on $k$, the case $k=1$ being trivial.
For the inductive step, set $a_{k}:=3a_{k-1}+1$. 
Let $v \in V(D)$ be the unique vertex of $D$ with out-degree $|D|-1$, 
and let $v_{1},v_{2},v_{3}$ be the corresponding three vertices in $D'$. 
There is at least one of the latter three vertices,
say $v_{1}$, for which the set $S\subseteq V(D)$ of vertices of $D$ which correspond to 
the out-neighbors of $v_{1}$ in $D'$ has cardinality  at least $(a_{k} - 1)/3=a_{k-1}$. 
Let also $S'\subseteq V(D')$ be the set of out-neighbors of $v_{1}$ in $D'$.
The digraph $D'[S']$ is clearly a blow-up of $D[S]$, and by the induction hypothesis, 
$D'[S']$ contains a $D[S]$-{\adm} acyclic tournament of order $k-1$. 
Using $v_{1}$ and the latter subgraph we obtain a $D$-{\adm} acyclic tournament of order $k$ in $D'$.
\end{proof}

\begin{claim}
\label{claim-blow-up}
For every $k \ge 1$, there exists a constant $b_{k}$ such that for every digraph $D$
on at least $b_{k}$ vertices, all blow-ups of 
either $D$ or $\bar D$  contain a $D$-{\adm} acyclic tournament of order $k$.
\end{claim}
\begin{proof}
We claim that $b_{k} := d_{a_{k}}$ will do. Indeed, by Claim~\ref{claim-DbarD},
$D$ or $\bar D$ contains then an acyclic tournament 
on a set $T$ of $a_{k}$ vertices, say without loss of generality
$D$. Then, following Claim~\ref{claim-blow-up-at}, every blow-up of $D[T]$
contains a $D[T]$-{\adm} acyclic tournament of order $k$, and the same clearly holds if we replace 
$D[T]$ with  $D$.
\end{proof}

We now have everything we need to conclude. 
Let $R$ (resp. $B$) be the digraph on vertex set $\{v_{1}, \dots, v_{p}\}$
where there is an arc from $v_{i}$ to $v_{k}$ ($i\neq k$) if 
$v_{k}$ is in at least two (resp. at most one) of the three sets 
$T_{i,1}, T_{i,2}, T_{i,3}$. By the definition of $D_{G}$, the red 
and blue parts of $D_{G}$ are blow-ups of respectively $R$ and $B$. 

Since $R = \bar B$, if $p \geq b_{\delta+1}$ holds, then using 
Claim~\ref{claim-blow-up} we deduce that there exists a monochromatic 
{\adm} acyclic tournament of order $\delta + 1$ in $D_{G}$. But then, $G$
has defect at least $\delta +1$ by Claim~\ref{claim-connection}, a 
contradiction. Hence, $p < b_{\delta+1}$, and $c_{\delta} := b_{\delta + 1} - 1$ 
will do in the statement of Theorem~\ref{th-L78-deg3}.
\end{proof}

\bibliographystyle{plain}
\bibliography{fdg-cfg}

\end{document}